\newtheorem{definition}{Definition}
\newtheorem{lemma}{Lemma}
\newtheorem{theorem}{Theorem}
\newtheorem{corollary}{Corollary}
\newtheorem{remark}{Remark}
\newtheorem{assumption}{Assumption}
\def\diag{\text{diag}}
\begin{document}
%
%
%
%
%
%
%
%
%

\title{Distributed Nash Equilibrium Seeking in Consistency-Constrained Multi-Coalition Games}
\author{Jialing Zhou, Yuezu Lv, Guanghui Wen, \IEEEmembership{Senior Member, IEEE}, Jinhu L\"u, \IEEEmembership{Fellow, IEEE}, and Dezhi Zheng
	\thanks{Jialing Zhou is with the School of Automation, Nanjing University of Science and Technology, Nanjing 210094, China (e-mail: jialingz@njust.edu.cn).}
	\thanks{Yuezu Lv and Guanghui Wen are with the School of Mathematics, Southeast University, Nanjing 211189, China (e-mail: yzlv@seu.edu.cn, wenguanghui@gmail.com).}
	\thanks{Jinhu L\"u is with the School of Automation Science and Electrical Engineering, Beihang University, Beijing 100191, China (e-mail: jhlu@iss.ac.cn).}
	\thanks{Dezhi Zheng is with the Advanced Research Institute of Multidisciplinary Science, Beijing Institute of Technology, Beijing 100081, China (e-mail: zhengdezhi@bit.edu.cn).}
}

\maketitle

\begin{abstract}
Distributed Nash equilibrium (NE) seeking problem for multi-coalition games has attracted increasing attention 
in recent years, but the research mainly focuses on the case without agreement demand within coalitions.
This paper considers a class of networked games among multiple coalitions where each coalition contains multiple agents that cooperate to minimize the sum of their costs, subject to the demand of reaching an agreement on their state values. {Furthermore, the underlying network topology among the agents does not need to be balanced.} 
To  achieve {the goal of NE seeking within such a context}, two estimates are constructed for each agent, namely, an estimate of partial derivatives of the cost function and an estimate of global state values, based on which, an iterative state updating law is elaborately designed.
{ Linear convergence of the proposed algorithm is demonstrated.
It is shown that 
the consistency-constrained multi-coalition games investigated in this paper put the well-studied 
networked games among individual players and distributed optimization in a unified framework, 
and the proposed algorithm
can easily degenerate into solutions to these problems.
}

\end{abstract}

\begin{IEEEkeywords}
	Multi-coalition games, Nash equilibrium, distributed  algorithm, consensus.
\end{IEEEkeywords}

\section{Introduction}
The past decade has witnessed a growing interest on distributed computation of Nash Equilibria in networked games, where selfish interacting agents 
subject to 
network topologies
strive to make optimal decisions based on neighboring information 
in a distributed manner
\cite{Stankovic2012,Pavel_Auto2016,ye_cyber,dingzhengtao_cyber,fangxiao_cyber,yipeng_cyber,zhangkj_tcasii,jgs_cyber}. In this paper, we focus on a class of networked games, termed \textit{consistency-constrained multi-coalition games}. Specifically, the cost of a coalition is the sum of the costs of all the individual agents within the coalition, and the agents in each coalition collaborate to \textit{determine a common coalition decision} that minimizes the coalition cost. Such a problem formulation characterizes both the cooperation of agents that belong to the same coalition and the competition among coalitions, which has {potential} applications in the fields of economics, engineering, politics, military science, and so forth \cite{ZENG2017,ZENG2019,politic}. 
{
For example, in multi-party politics, multiple delegates with heterogeneous policy preferences in the same party should jointly decide a policy to be announced to compete with other parties to win the voting \cite{politic}.
Another example is the pricing competition of highly interchangeable products among multiple firms, where the subsidiaries belonging to the same firm cooperatively determine a uniform price for the product so as to achieve the profit maximization of the entire firm. 
}
Our goal is to develop a distributed algorithm for the agents in a consistency-constrained multi-coalition game that converges to the NE.

Existing studies concerning networked games among coalitions rather than individual agents are reported in \cite{two-network,two-network-switching,multi-cluster-ymj2018,multi-cluster-ymj2019,Pang20200829,Pang202012}.  
In \cite{two-network}, a two-coalition zero-sum game is introduced, where two adversarial coalitions have opposite objective regarding the optimization of a common function of the coalition states. Considering the requirement of agents in the same coalition agreeing on a common state, distributed dynamics of agent states that converge to the NE is developed under fixed undirected and weight-balanced directed topologies in \cite{two-network} and time-varying topologies in \cite{two-network-switching}. 
Then, networked games among multiple coalitions are further investigated in \cite{multi-cluster-ymj2018,multi-cluster-ymj2019,Pang20200829,Pang202012}, where continuous-time and discrete-time algorithms for NE computation are presented. However, in these studies, the consistency constraint within coalitions is removed, i.e., agents belong to the same coalition are permitted to have different states or make different decisions. When taking the consistency constraint into account, distributed NE seeking in the multi-coalition games becomes more challenging, and studies along this line are rare \cite{ZENG2017,ZENG2019,mengmin2020}. In \cite{ZENG2017}, a distributed continuous-time generalized NE seeking algorithm is proposed for consistency-constrained multi-coalition games, where the coalitions are assumed to have the same number of agents with the same topology for simplicity. The result is further extended to the case with different coalition graphs \cite{ZENG2019}. 
{
Note that the algorithms in \cite{ZENG2017,ZENG2019} are in the continuous-time domain, while it is preferable to design discrete-time algorithms for distributed NE seeking in consistency-constrained multi-coalition games for practical implementation. 
Such a discrete-time algorithm is proposed in \cite{mengmin2020} under undirected topologies, where 
the cost of each agent  is assumed to be independent of the states of the other agents in the same coalition, and  each coalition should designate one representative agent with the topology among the representative agents being connected. 
In practice, it is desirable to develop discrete-time algorithms with these restrictions relaxed. 
}

Motivated by the above discussions, we investigate the {discrete-time}  distributed NE computation for consistency-constrained multi-coalition games {with general individual objective functions} in general directed networks. {The main difficulty lies in determining appropriate variables to be estimated. And the general directed graph makes the problem more challenging, since the left eigenvector associated with the eigenvalue $1$ of the induced matrix of the unbalanced graph does not match the form of the coalition cost, which is the summation of the individual costs. 
	In this paper, the}
property of  NE in consistency-constrained multi-coalition games is analyzed, based on which, a new discrete-time distributed  NE seeking algorithm is designed. Specifically, 
two estimates are constructed for observing the partial derivations of the cost functions and the states of all the agents, and a new state updating law is elaborately designed for the agent states to converge to the NE while meeting the consistency constraint within coalitions. 
To make the algorithm effective in general unbalanced directed networks,   in the estimation of the partial derivations of the cost functions, each agent sends to its intra-coalition out-neighbors the weighted information, where the weights that satisfies a certain condition are determined by each agent in advance. Such a mechanism is inspired by the consensus protocol in \cite{kaicai} and the push-pull distributed optimization algorithm in \cite{pushi}.

The main contribution of this paper lies in the following aspects. (i) A fundamental architecture is proposed to develop a novel distributed discrete-time algorithm for the NE computation in  consistency-constrained multi-coalition games {with general individual objective functions} under general directed network topologies.
(ii) A unified framework is built for the studies of two-network zero-sum games \cite{two-network,two-network-switching}, networked games among individual players \cite{Pavel_Auto2016} and distributed optimization \cite{pushi,zhaoyu_opti}, and the proposed algorithm provides a unified solution to  these problems, since they are special cases of the consistency-constrained multi-coalition games. {(iii)  
A new distributed optimization algorithm is induced from the proposed distributed NE seeking algorithm, which reduces the communication burden compared with the existing push-pull algorithm in \cite{pushi}.
}


The remainder of the paper is organized as follows. The following section formulates the problem and analyzes the property of NE. Then, the distributed discrete-time NE seeking algorithm is proposed in Section \ref{sec.algorithm}, and convergence analysis is provided in Section \ref{sec.convergence}. Section \ref{sec.discussion} discusses the connection between the presented results with some related studies. Numerical simulations are carried out to verify the effectiveness of the proposed algorithm in \ref{sec.simulation}, and Section \ref{sec.conclusion} finally concludes the paper.

\textbf{Notation}. Symbols $\otimes$ and $\|\cdot\|$ denote the Kronecker product and the Euclidian norm, respectively. The set of real numbers is denoted by $\mathbb{R}$, and the set of $n$-dimensional real column vectors is denoted by $\mathbb{R}^n$. Symbols $\mathbb{Z}$ and $\mathbb{Z}^+$ are the sets of non-negative integers and positive integers, respectively. $I_n$ is the $n$-dimensional identity matrix. $\bm{1}_n$ denotes the $n$-dimensional column vector with all the entries being $1$.

\section{Problem Formulation}\label{SectionTwo}

\begin{figure}
\centering
\includegraphics[width=3.25in]{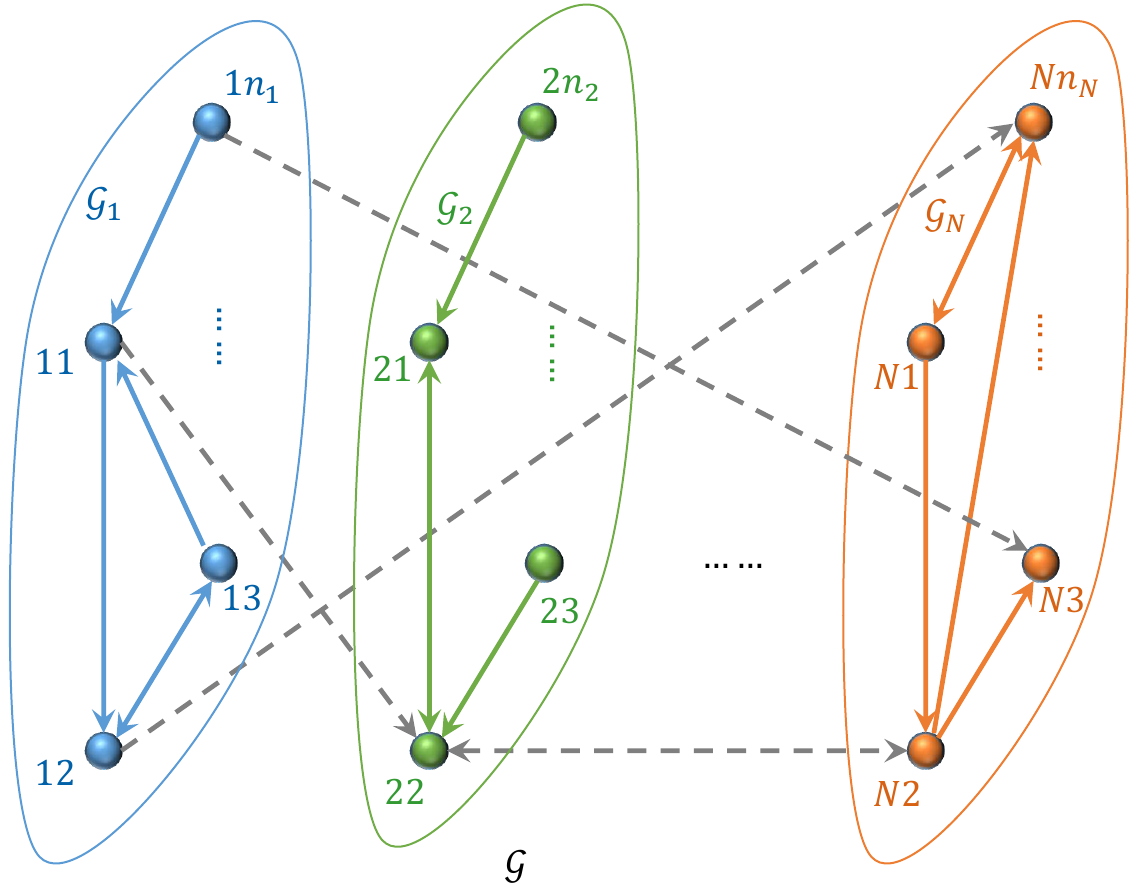}
\caption{The underlying communication topology of the multi-coalition game.}\label{fig.graph.sec.2}
\end{figure}

Consider a multi-coalition game of $N~(N\in\mathbb{Z}^+)$ coalitions indexed by $1,\cdots, N$ respectively. 
Coalition $i(i=1,\cdots,N)$ contains $n_i(n_i\in\mathbb{Z}^+)$ agents. Denote the agent set of coalition $i$ by $\mathcal{V}_i=\{i1,i2,\cdots,in_i\}$, where $ij$ represents the $j$th agent in coalition $i$.
Information spreads within each coalition and also across different coalitions. Denote the total number of the game participants by  $n_{\text{sum}}=\sum_{i=1}^Nn_i$. 
Figure \ref{fig.graph.sec.2} depicts the underlying communication topology among the $n_{\text{sum}}$ participants, which is represented by a directed graph $\mathcal{G}(\mathcal{V},\mathcal{E})$ with the node (agent) set $\mathcal{V}=\mathcal{V}_1\cup\cdots\cup\mathcal{V}_N$ and the edge (communication link) set $\mathcal{E}\subseteq \mathcal{V}\times \mathcal{V}$. A pair $(ij,pq)\in\mathcal{E}$ is an edge of $\mathcal{G}$ if agent $pq$ has access to the information of agent $ij$, and then agent $ij$ is called an \textit{in-neighbor} of agent $pq$, and agent $pq$ is called an \textit{out-neighbor} of agent $ij$.
A directed path from agent $i_1 j_1$ to agent $i_l j_l$ is 
a sequence of edges $(i_mj_m,i_{m+1}j_{m+1})\in\mathcal{E},m=1,\cdots,l-1$.
Graph $\mathcal{G}$ is strongly
connected if there exist directed paths from every node to every
other node.

Define  the induced subgraph  $\mathcal{G}_i(\mathcal{V}_i,\mathcal{E}_i)$ with the node set $\mathcal{V}_i$ and the edge set $\mathcal{E}_i\subseteq \mathcal{V}_i\times \mathcal{V}_i$. Obviously, $\mathcal{G}_i$ represents the communication topology among the agents within coalition $i$.

For each agent $ij\in\mathcal{V}$, define the in-neighbor set $\mathcal{N}_{ij}^{\text{in}}=\{pq|(pq,ij)\in\mathcal{E}\}$, and define the intra-coalition in-neighbor set $\mathcal{N}_{ij}^{i\text{-in}}=\{im|(im,ij)\in\mathcal{E}_i\}$  and intra-coalition out-neighbor set $\mathcal{N}_{ij}^{i\text{-out}}=\{im|(ij,im)\in\mathcal{E}_i\}$.

Define the adjacency matrix of graph $\mathcal{G}$ by $A=[a_{ij}^{pq}]_{n_{\text{sum}}\times n_{\text{sum}}}$, where $a_{ij}^{pq}$ denotes the element of $A$ on the $(\sum_{k=1}^{i-1}n_k+j)$th row and the $(\sum_{k=1}^{p-1}n_k+q)$th column, and
\begin{equation*}
a_{ij}^{pq}=\begin{cases}
1,&\text{if} ~(pq,ij)\in\mathcal{E},~pq\neq ij,\\
0,&\text{otherwise}.
\end{cases}
\end{equation*}
Define the Laplacian matrix of graph $\mathcal{G}$ by  $L=[l_{ij}^{pq}]_{n_{\text{sum}}\times n_{\text{sum}}}$,  where $l_{ij}^{pq}$ denotes the element of $L$ on the $(\sum_{k=1}^{i-1}n_k+j)$th row and the $(\sum_{k=1}^{p-1}n_k+q)$th column with
$l_{ij}^{ij}=\sum_{l=1}^{N}\sum_{m=1}^{n_l}a_{ij}^{lm}$ and $l_{ij}^{pq}=-a_{ij}^{pq},~ij\neq pq$.

Denote the adjacency matrix of graph $\mathcal{G}_i~(i=1,\cdots,N)$ by $A_i=[a_{ij}^{il}]_{{n_i}\times n_i}$ where $a_{ij}^{il}$ denotes the element of $A_i$ on the $j$th row and the $l$th column
 with
\begin{equation*}
a_{ij}^{il}=\begin{cases}
1,&\text{if} ~(il,ij)\in\mathcal{E}_i,~j\neq l,\\
0,&\text{otherwise}.
\end{cases}
\end{equation*}
Obviously,  $A_1,\cdots,A_N$ are the diagonal blocks of matrix $A$.

\begin{assumption}\label{assp.graph}
	The graph $\mathcal{G}$ is strongly connected, and all the sub-graphs $\mathcal{G}_i(i=1,\cdots,N)$ are strongly connected.
\end{assumption}

The state of agent $ij$ (the $j$th agent in coalition $i$) is denoted by $x_{ij}\in\mathbb{R}$. Let $\bm{x}_i=[x_{i1},x_{i2},\cdots,x_{in_i}]^{\rm{T}}\in\mathbb{R}^{n_i}$ and $\bm{x}=[\bm{x}_1^{\rm{T}},\bm{x}_2^{\rm{T}},\cdots,\bm{x}_{N}^{\rm{T}}]^{\rm{T}}\in\mathbb{R}^{n_{\textit{sum}}}$, which represent the collective state of coalition $i$ and the collective state of the multi-coalition game, respectively.
The cost of each agent may be influenced by all the game participants and the cost of each coalition is the sum of the costs of all the agents within the coalition. Let $f_{ij}({\bm{x}}):\mathbb{R}^{n_\text{sum}}\rightarrow\mathbb{R}$ be the cost function of agent $ij~(\forall~ij\in\mathcal{V})$, and then the cost function of each coalition can be formulated as
\begin{equation}\label{eq.cost function}
f_i(\bm{x})=\sum_{j=1}^{n_i}f_{ij}({\bm{x}}),\quad i=1,2,\cdots,N.
\end{equation}
In each coalition, the agents aim to achieve consensus of the state values, and at the meanwhile, minimize the cost of the coalition.
Define vector sets $$\Omega_i=\{\bm{x}_i{\in}\mathbb{R}^{n_i}|\bm{x}_i=\theta\bm{1}_{n_i},\theta\in\mathbb{R}\},~i=1,\cdots,N,$$ and $$\Omega{=}\{\bm{x}{\in}\mathbb{R}^{n_{\text{sum}}}|\bm{x}{=}[\theta_1\bm{1}_{n_1}^{\rm{T}},\theta_2\bm{1}_{n_2}^{\rm{T}},{\cdots},\theta_N\bm{1}_{n_N}^{\rm{T}}]^{\rm{T}},\theta_1,{\cdots},\theta_N{\in}\mathbb{R}\}.$$
Then, multi-coalition game  studied in this paper is formulated as follows.
\begin{equation}\label{eq.problem1}
\begin{aligned}
&\min_{{x}_{ij}}f_i(\bm{x})=\min_{{x}_{ij}}\sum_{j=1}^{n_i}f_{ij}({\bm{x}}),~\forall ij\in\mathcal{V},
\\
&\textit{s.t.}\quad \bm{x}_{i}\in\Omega_{i}.
\end{aligned}
\end{equation}
Before presenting the definition of NE, we denote $\bm{x}_{-i}=[\bm{x}_{1}^{\rm{T}},\cdots,\bm{x}_{i-1}^{\rm{T}},\bm{x}_{i+1}^{\rm{T}},\cdots,\bm{x}_{N}^{\rm{T}}]^{\rm{T}}$ and $(\bm{x}_{i},\bm{x}_{-i})\triangleq \bm{x}$
for notational simplicity.
\begin{definition}
An NE of the multi-coalition game (\ref{eq.problem1}) is a vector $\bm{x}^*=(\bm{x}_{i}^{*},\bm{x}_{-i}^*)\in\Omega$ with property that for each coalition $i=1,\cdots,N$,
\begin{equation*}
f_i(\bm{x}_{i}^{*},\bm{x}_{-i}^*){\leq}f_i(\bm{x}_{i},\bm{x}_{-i}^*),~ \forall \bm{x}_i\in\Omega_i.
\end{equation*}
\end{definition}
\begin{remark}
 Note that the consistency constraint of the agent states within each coalition can be rewritten as the equality constraint  $\mathcal{L}_i\bm{x}_i=0,~i=1,\cdots,N$. Using the method of Lagrange multipliers {\cite{book_nonlinear_optimization}}, one can derive that the NE of (\ref{eq.problem1}) $\bm{x}^*$ satisfies $$\frac{\partial f_i}{\partial \bm{x}_{i}}(\bm{x}^*)+\mathcal{L}_i^{\rm{T}}\bm{\lambda}_i^*=0,~\forall i=1,\cdots,N,$$ where $\bm{\lambda}_i^*,i=1,\cdots,N$ are the optimal Lagrange multipliers. This further implies that $$	\bm{1}_{n_i}^{\rm{T}}\frac{\partial f_i}{\partial \bm{x}_{i}}(\bm{x}^*)=0,~ \forall i=1,\cdots,N,$$
since  $\mathcal{L}_i\bm{1}_{n_i}=0$. 	$\hfill\blacksquare$
\end{remark}

{Next, we will show that the NE of the consistency-constrained N-coalition game has a special relationship with the NE of an N-player game.} 
Suppose that there is a virtual decision center in each coalition, which can obtain the full information of the coalition and determine a common state for the intra-coalition agents. Denote the common state by $y_i\in\mathbb{R}$ and let $\bm{y}=[y_1,y_2,\cdots,y_N]^{\rm{T}}\in\mathbb{R}^N$. Then, the cost function of coalition $i$ can be reformulated as a composite function  $g_i:\mathbb{R}^N\rightarrow\mathbb{R}$, induced by $f_i$ as follows:
\begin{equation}\label{eq.def.gi}
g_{i}(\bm{y})=f_{i}([y_1\bm{1}_{n_1}^{\rm{T}},y_2\bm{1}_{n_2}^{\rm{T}},\cdots,y_N\bm{1}_{n_N}^{\rm{T}}]^{\rm{T}}).
\end{equation}
View each virtual center as a virtual player.
And the multi-coalition game (\ref{eq.problem1}) turns to the game among the $N$ virtual players:
%
\begin{equation}\label{eq.problem2}
\min_{{y}_{i}}g_i(\bm{y}),\quad\forall i=1,2,{\cdots},N.
\end{equation}
Denote $\bm{y}_{-i}=[y_{1},\cdots,y_{i-1},y_{i+1},\cdots,y_{N}]^{\rm{T}}$ and $(y_i,\bm{y}_{-i})\triangleq\bm{y}$ for notational simplicity, and the definition of the NE in game (\ref{eq.problem2}) is given as follows.
\begin{definition}
	An NE of the game (\ref{eq.problem2}) is a vector $\bm{y}^*=({y}_{i}^{*},\bm{y}_{-i}^*)\in\mathbb
{R}^N$ with property that for each coalition $i=1,\cdots,N$,
	\begin{equation*}
g_i({y}_{i}^{*},\bm{y}_{-i}^*){\leq}g_i({y}_{i},\bm{y}_{-i}^*), ~\forall y_i\in\mathbb{R}.
\end{equation*}
\end{definition}
{One can directly derive from Definitions 1 and 2 that, $\bm{x}=[y_1\bm{1}_{n_1}^{\rm{T}},\cdots,y_N\bm{1}_{n_N}^{\text{     T}}]^{\rm{T}}$ is an NE of the N-coalition game (\ref{eq.problem1}), if and only if $\bm{y}=[y_1,\cdots,y_N]^{\rm{T}}$ is an NE of the N-player game (\ref{eq.problem2}).}
 
In this paper, we assume that the cost functions of the agents satisfy the following assumptions.

\begin{assumption}\label{assp.fij.lipschitz}
$\forall ij\in\mathcal{V}$, 
$f_{ij}(\cdot)$ is a convex $C^2$ function. Moreover, $\nabla f_{ij}(\cdot)$ is Lipschitz with the constant $l_{ij}>0$, i.e., $\|\nabla f_{ij}(\bm{a}){-}\nabla f_{ij}(\bm{b})\|{\leq}l_{ij}\|\bm{a}-\bm{b}\|,\forall\bm{a},\bm{b}\in\mathbb{R}^{n_\text{sum}}$.
\end{assumption}
Define the pseudo gradient  $$\mathcal{Q}(\bm{y})=[\frac{\partial g_1(\bm{y})}{\partial {y}_1},\frac{\partial g_2(\bm{y})}{\partial {y}_2},\cdots,\frac{\partial g_N(\bm{y})}{\partial {y}_N}]^{\rm{T}}\in\mathbb{R}^{N}.$$

\begin{assumption}\label{assp.Q}{
The pseudo gradient $\mathcal{Q}(\bm{y})$ is strongly monotone with the constant $l>0$, i.e.,} $(\bm a{-} \bm b)^{\rm{T}} (\mathcal{Q}(\bm a){-}\mathcal{Q}(\bm b)){\geq} l\|\bm a{-}\bm b\|^2,\forall\bm{a},\bm{b}\in\mathbb{R}^{N}$.
\end{assumption}
{
The above assumptions related to the cost functions are quite common in the study of NE seeking \cite{Pavel_Auto2016}, which together ensure the  existence and the uniqueness of  NE in game (\ref{eq.problem2}) and thus in game (\ref{eq.problem1}). Under Assumptions 2 and 3}, one can directly conclude from the above discussions that $$\bm{x}^*=[y_1^*\bm{1}_{n_1}^{\rm{T}},y_2^*\bm{1}_{n_2}^{\rm{T}},\cdots,y_N^*\bm{1}_{n_N}^{\rm{T}}]^{\rm{T}}.$$ 	

\begin{remark}
Since
	\begin{equation}\label{eq.proper.y*.g}
	\frac{\partial g_i}{\partial {y}_{i}}(\bm{y}^*)=0,\quad \forall i=1,\cdots,N,
	\end{equation}
	and
	\begin{equation}
	\begin{aligned}
	\frac{\partial g_i}{\partial {y}_{i}}(\bm{y})=&\sum_{j=1}^{n_i}\frac{\partial f_i}{\partial x_{ij}}([y_1\bm{1}_{n_1}^{\rm{T}},y_2\bm{1}_{n_2}^{\rm{T}},\cdots,y_N\bm{1}_{n_N}^{\rm{T}}]^{\rm{T}})\\
	&=\bm{1}_{n_i}^{\rm{T}}\frac{\partial f_i}{\partial \bm{x}_{i}}([y_1\bm{1}_{n_1}^{\rm{T}},y_2\bm{1}_{n_2}^{\rm{T}},\cdots,y_N\bm{1}_{n_N}^{\rm{T}}]^{\rm{T}}),
	\end{aligned}
	\end{equation}
	one has
	\begin{equation}\label{eq.proper.y*.f}
	\begin{aligned}
	&\bm{1}_{n_i}^{\rm{T}}	\frac{\partial f_i}{\partial \bm{x}_{i}}([y_1^*\bm{1}_{n_1}^{\rm{T}},y_2^*\bm{1}_{n_2}^{\rm{T}},\cdots,y_N^*\bm{1}_{n_N}^{\rm{T}}]^{\rm{T}})\\
	=&\bm{1}_{n_i}^{\rm{T}}\frac{\partial f_i}{\partial \bm{x}_{i}}(\bm{x}^*)=0,~~\forall i=1,\cdots,N,
	\end{aligned}
	\end{equation}
	which is consistent with the analysis in Remark 1. 	$\hfill\blacksquare$
\end{remark}	

In the networked multi-coalition game (\ref{eq.problem1}) investigated in this paper, each agent $ij\in\mathcal{V}$ has the knowledge of its own cost function $f_{ij}$ and state value $x_{ij}$; besides, it can obtain the information of its in-neighbours  via  the directed communication network $\mathcal{G}$.
The objective is to develop a distributed NE seeking algorithm under  the directed communication network $\mathcal{G}$, such that the collective agent state $\bm{x}$ will
converge to the NE  $\bm{x}^*$, given any initial state conditions. 

\section{Distributed discrete-time NE seeking algorithm}\label{sec.algorithm}

In this section, we present a new distributed NE seeking algorithm for the multi-coalition game.

First, each agent $ij\in\mathcal{V}$ chooses two sets of positive parameters $\left\{r_{ij}^{im}| im\in\mathcal{J}_{ij}^{i\textit{-in}}\right\}$ and $\left\{c_{im}^{ij}| im\in\mathcal{J}_{ij}^{i\textit{-out}}\right\}$, where $\mathcal{J}_{ij}^{i\textit{-in}}=\mathcal{N}_{ij}^{i\textit{-in}}\cup\{ij\}$, $\mathcal{J}_{ij}^{i\textit{-out}}=\mathcal{N}_{ij}^{i\textit{-out}}\cup\{ij\}$, which satisfy the following conditions:
\begin{equation}\label{eq.weight}
\begin{aligned}
\sum_{im\in\mathcal{J}_{ij}^{i\text{-in}}}r_{ij}^{im}=1, 	~r_{ij}^{im}>0, \forall im\in\mathcal{J}_{ij}^{i\text{-in}},\\
\sum_{im\in\mathcal{J}_{ij}^{i\text{-out}}}c_{im}^{ij}=1,~c_{im}^{ij}>0,\forall im\in\mathcal{J}_{ij}^{i\text{-out}}.
\end{aligned}
\end{equation}
The two sets of parameters 
are used as the weights of information that $ij$ received from its intra-coalition in-neighbors and sent to its intra-coalition out-neighbors respectively, based on which, the distributed NE seeking law  for each agent $ij\in\mathcal{V}$ is designed as:
\begin{align}
		x_{ij}(k+1)=&\sum_{im\in\mathcal{N}_{ij}^{i\text{-in}}}r_{ij}^{im}x_{im}(k){-}\frac{{\alpha}}{n_i}\sum_{m=1}^{n_i}\psi^{im}_{ij}(k),\label{eq.law.x.ij}\\
		\psi^{il}_{ij}(k+1)=&\sum_{im\in\mathcal{N}_{ij}^{i\text{-in}}}c_{ij}^{im}\psi_{im}^{il}(k)+\frac{\partial f_{ij}}{\partial x_{il}}\big(\bm{\xi}_{ij}(k+1)\big)\nonumber\\
		&{-}\frac{\partial f_{ij}}{\partial x_{il}}\big(\bm{\xi}_{ij}(k)\big),~~\forall il\in\mathcal{V}_i,\label{eq.law.psi.ijil}
\end{align} 
where {$\alpha$ is a positive constant to be determined later,} the initial value of $\psi_{ij}^{il}$ is set as $\psi_{ij}^{il}(t_0)=\frac{\partial f_{ij}}{\partial x_{il}}(\bm{\xi}_{ij}(t_0))$, and $\bm{\xi}_{ij}=[\xi_{ij}^{11},\xi_{ij}^{12},{\cdots},\xi_{ij}^{1{n_1}},\xi_{ij}^{21},{\cdots},\xi_{ij}^{2{n_2}},\cdots,\xi_{ij}^{N{n_N}}]\in\mathbb{R}^{n_{\text{sum}}}$ denotes the estimation of $\bm{x}$ by agent $ij$, with $\xi_{ij}^{pq},\forall pq\in\mathcal{V}$ governed by
\begin{equation}\label{eq.law.xi.ijpq}
	\begin{aligned}
\xi_{ij}^{pq}(k+1)&=\xi_{ij}^ {pq}(k){-}\frac{1}{{\gamma_{ij}^{pq}}}\bigg(\sum\limits_{lm\in\mathcal{N}_{ij}^{\text{in}}}\Big(\xi_{ij}^{pq}(k)\\
		&{-}\xi_{lm}^{pq}(k)\Big)+a_{ij}^{pq}\Big(\xi_{ij}^{pq}(k){-}x_{pq}(k)\Big)\bigg),
	\end{aligned}
\end{equation}
where {$\gamma_{ij}^{pq},~\forall ij,pq{\in}\mathcal{V}$ are constant parameters satisfying $\gamma_{ij}^{pq}>d_{ij}+a_{ij}^{pq}$  with  $d_{ij}$ denoting the in-degree of the agent $ij$, i.e., $d_{ij}=\sum_{lm\in\mathcal{N}_{ij}^{\text{in}}} a_{ij}^{lm}$.}

\begin{remark}

The analysis on the property of NE in the previous section has shown that $\bm{1}_{n_i}^{\rm{T}}\frac{\partial f_i}{\partial \bm{x}_{i}}(\bm{x}^*)=0,~~\forall i=1,\cdots,N$. To make $\bm{x}$ converge to $\bm{x}^*$,  in the state iterative design for each agent $ij\in\mathcal{V}$, we intuitively want to use the feedback of $\bm{1}_{n_i}^{\rm{T}}\frac{\partial f_i}{\partial \bm{x}_{i}}(\bm{x})$, which however, is global information that cannot be directly accessed by agent $ij$ in the networked game.
To bypass this obstacle, the leader-following consensus protocol (\ref{eq.law.xi.ijpq}) is used for each agent estimating the global state $\bm{x}$, and (\ref{eq.law.psi.ijil}) is designed to make $\psi_{ij}^{im}$ converge to $\tau_{ij}\frac{\partial f_i}{\partial {x}_{im}}(\bm{x})$ with $\tau_{ij}$ being a certain positive constant, such that $\sum_{m=1}^{n_i}\psi_{ij}^{im}$ will converge to $\tau_{ij}\bm{1}_{n_i}^{\rm{T}}\frac{\partial f_i}{\partial \bm{x}_{i}}(\bm{x})$. To conclude,
the proposed distributed NE seeking algorithm contains a state iterative law (\ref{eq.law.x.ij}) and two auxiliary laws (\ref{eq.law.psi.ijil}) and (\ref{eq.law.xi.ijpq}), where (\ref{eq.law.xi.ijpq}) is commonly used in distributed NE seeking  algorithms for global state estimation, while (\ref{eq.law.x.ij}) and (\ref{eq.law.psi.ijil}) are elaborately designed to solve the NE computation problem for the consistency-constrained multi-coalition game.
\end{remark}

\begin{remark}
The values of $x_{ij},\bm{\xi}_{ij}$ and $\bm{\psi}_{ij}\triangleq [\psi_{ij}^{i1},\cdots,\psi_{ij}^{in_i}]^{\rm{T}}$ are local to agent $ij$ at each iteration. Moreover, agent $ij$ obtains the information of  
$c_{ij}^{im}\bm{\psi}_{im},\forall im\in\mathcal{N}_{ij}^{i\text{-in}}$
and $\bm{\xi}_{pq}, x_{pq},\forall pq\in\mathcal{N}_{ij}^{\text{in}}$ 
from its in-neighbors via communication networks
{(it should be clarified that $\mathcal{N}_{ij}^{i\text{-in}}\subseteq\mathcal{N}_{ij}^{\text{in}}$, and thus the information of $x_{im}, \forall im\in\mathcal{N}_{ij}^{i\text{-in}}$, required in (\ref{eq.law.x.ij}), is included in the information of $x_{pq}, \forall pq\in\mathcal{N}_{ij}^{\text{in}}$).
Note that the weights $r_{ij}^{im}, \forall im\in\mathcal{N}_{ij}^{i\text{-in}}$ are local parameters that determined by agent $ij$, while
 the weights $c_{ij}^{im},\forall ij\in\mathcal{N}_{im}^{i\text{-out}}$ are determined by agent $im$.} Agent $im$ sends the value of  $c_{ij}^{im}\bm{\psi}_{im},\forall ij\in\mathcal{N}_{im}^{i\text{-out}}$ rather than that of $\bm{\psi}_{im}$ to its out-neighbors, and  this mechanism plays an important role for making the algorithm effective in general directed networks, which is inspired by the consensus protocol in \cite{kaicai} and the push-pull distributed optimization algorithm in \cite{pushi}.
There are different ways to determine the parameters 	$r_{ij}^{im},\forall im\in\mathcal{N}_{ij}^{i\textit{-in}}$ and $c_{im}^{ij},\forall im\in\mathcal{N}_{ij}^{i\textit{-out}}$. A simple choice is $r_{ij}^{im}=r_{ij}^{ij}=\frac{1}{|\mathcal{N}_{ij}^{i\text{-in}}|+1},\forall im\in\mathcal{N}_{ij}^{i\textit{-in}}$ and $c_{im}^{ij}=c_{ij}^{ij}=\frac{1}{|\mathcal{N}_{ij}^{i\text{-out}}|+1}, \forall im\in\mathcal{N}_{ij}^{i\textit{-out}}$.
\end{remark}

\begin{algorithm}
	\caption{Distributed NE seeking algorithm}
	Each agent $ij\in\mathcal{V}$ chooses two sets of parameters 
	$r_{ij}^{im},\forall im\in\mathcal{N}_{ij}^{i\textit{-in}}$ and $c_{im}^{ij},\forall im\in\mathcal{N}_{ij}^{i\textit{-out}}$ which satisfy (\ref{eq.weight}).

	Each agent $ij\in\mathcal{V}$ initializes with arbitrary $x_{ij}(t_0)\in\mathbb{R}$, $\bm{\xi}_{ij}(t_0)\in\mathbb{R}^{n_{\text{sum}}}$ and $\psi_{ij}^{il}(t_0)=\frac{\partial f_{ij}}{\partial x_{il}}(\bm{\xi}_{ij}(t_0))\in\mathbb{R},~\forall il\in\mathcal{V}_i$.
	
	\textbf{for} $k=0,1,\cdots,$ \textbf{do}
	\begin{enumerate}
		\item[] for each agent $ij\in\mathcal{V}$,
		\item[] agent $ij$ obtains $c_{ij}^{im}\psi_{im}^{i1},\cdots,c_{ij}^{im}\psi_{im}^{in_i},\forall im\in\mathcal{N}_{ij}^{i\text{-in}}$ and $\bm{\xi}_{pq}, x_{pq},\forall pq\in\mathcal{N}_{ij}^{\text{in}}$ from its in-neigbors.
		\item[]  agent $ij$ updates $\bm{\xi}_{ij}$ according to (\ref{eq.law.xi.ijpq}) ;
		\item[ ] agent $ij$ updates $x_{ij},\psi_{ij}^{i1},\cdots,\psi_{ij}^{in_i}$
		according to (\ref{eq.law.x.ij})(\ref{eq.law.psi.ijil}).
	\end{enumerate}
	\textbf{end for}
\end{algorithm}

To write the proposed NE seeking law in collective form, we 
define 	the following column vectors:
\begin{equation*}
\begin{aligned}
&\bm{\psi}_i=[\psi^{i1}_{i1},\psi^{i2}_{i1},{\cdots},\psi^{in_i}_{i1},\psi_{i2}^{i1},\cdots,\psi_{i2}^{in_i},\cdots,\psi_{in_i}^{in_i}]^{\rm{T}}\in\mathbb{R}^{n_i^2},\\
&\bm{\psi}=[\bm{\psi}_1^{\rm{T}},\bm{\psi}_2^{\rm{T}},{\cdots},\bm{\psi}_N^{\rm{T}}]^{\rm{T}}\in\mathbb{R}^{n_1^2+\cdots+n_N^2},\\
&\bm{\xi}_i=[(\bm{\xi}_{i1})^{\rm{T}},(\bm{\xi}_{i2})^{\rm{T}},\cdots,(\bm{\xi}_{i{n_i}})^{\rm{T}}]^{\rm{T}}\in\mathbb{R}^{n_in_{\text{sum}}},\\
&\bm{\xi}=[(\bm{\xi}_{1})^{\rm{T}},(\bm{\xi}_{2})^{\rm{T}},\cdots,(\bm{\xi}_{N})^{\rm{T}}]^{\rm{T}}\in\mathbb{R}^{n_{\text{sum}}^2},
\end{aligned}
\end{equation*} 
and the function $\mathcal{P}_i:\mathbb{R}^{n_in_\text{sum}}\rightarrowtail\mathbb{R}^{n_i^2}$:
\begin{equation*}
\begin{aligned}
{\mathcal{P}}_i(\bm{\xi}_{i})=&[\frac{\partial f_{i1}}{\partial x_{i1}}(\bm{\xi}_{i1}),\frac{\partial f_{i1}}{\partial x_{i2}}(\bm{\xi}_{i1}),{\cdots},\frac{\partial f_{i1}}{\partial x_{i{n_i}}}(\bm{\xi}_{i1}),\frac{\partial f_{i2}}{\partial x_{i1}}(\bm{\xi}_{i2}),\\
&\frac{\partial f_{i2}}{\partial x_{i2}}(\bm{\xi}_{i2}),
{\cdots},\frac{\partial f_{i2}}{\partial x_{i{n_i}}}(\bm{\xi}_{i2}),{\cdots},\frac{\partial f_{i{n_i}}}{\partial x_{i{n_i}}}(\bm{\xi}_{i{n_i}})]^{\rm{T}}\in\mathbb{R}^{n_i^2}.
\end{aligned}
\end{equation*}
Let $r_{ij}^{im}=0,\forall im\notin \mathcal{N}_{ij}^{i\text{-in}}$ and 
$c_{im}^{ij}=0,\forall im\notin\mathcal{N}_{ij}^{i\text{-out}}$. Define $R_i=[r_{ij}^{im}]\in\mathbb{R}^{n_i\times n_i}$ with $r_{ij}^{im}$ being the element on the $j$th row and the $m$th column. Similarly, define $C_i=[c_{ij}^{im}]\in\mathbb{R}^{n_i\times n_i}$ with $c_{ij}^{im}$ being the element on the $j$th row and the $m$th column.
Then, (\ref{eq.law.x.ij})-(\ref{eq.law.xi.ijpq}) can be respectively rewritten in the following compact form.
\begin{align}
\bm{x}_i(k+1)=&R_i\bm{x}_i(k){-}\frac{\alpha}{n_i}(I_{n_i}{\otimes}\bm{1}_{n_i}^{\rm{T}}){\bm{\psi}}_i(k),\label{eq.law.x.i}\\
{\bm{\psi}}_i(k+1)=&(C_i{\otimes} I_{n_i}){\bm{\psi}}_i(k)+{\mathcal{P}}_i(\bm{\xi}_i(k+1))\nonumber\\
&{-}{\mathcal{P}}_i(\bm{\xi}_i({k})),\label{eq.law.psi.i}\\
\bm{\xi}(k+1)
=&\bm{\xi}(k){-}\Gamma(\mathcal{L}{\otimes} I_{n_\text{sum}}+A_d)\big(\bm{\xi}(k)\nonumber\\
&{-}\bm{1}_{n_\text{sum}}{\otimes}\bm{x}(k)\big)\label{eq.law.xi}
\end{align}
where $\Gamma,A_d\in\mathbb{R}^{n_{\text{sum}}^2\times n_{\text{sum}}^2}$ are defined as
\begin{equation*}
\begin{aligned}
\Gamma=&\diag\{{\frac{1}{\gamma_{11}^{11}},\cdots,\frac{1}{\gamma_{11}^{Nn_N}},\frac{1}{\gamma_{12}^{11}},\cdots,\frac{1}{\gamma_{12}^{Nn_N}},\cdots,\frac{1}{\gamma_{Nn_N}^{Nn_N}}}\},\\
A_d=&\diag\{a_{11}^{11},{\cdots},a_{11}^{Nn_N},a_{12}^{11},{\cdots},a_{12}^{Nn_N},{\cdots},a_{Nn_N}^{Nn_N}\}.
\end{aligned}
\end{equation*}
By definitions, it is obvious that $R_i$ and $C_i$ respectively satisfies $R_i\bm{1}_{n_i}=\bm{1}_{n_i}$ and $\bm{1}_{n_i}^{\rm{T}}C_i=\bm{1}_{n_i}^{\rm{T}}$. We further define $u_i^{\rm{T}}$ as the left eigenvector of $R_i$ associated with the eigenvalue $1$, i.e., $u_i^{\rm{T}}R_i=u_i^{\rm{T}}$, which satisfies $u_i^{\rm{T}}\bm{1}_{n_i}=n_i$, and define $v_i$ as the right eigenvector of $C_i$ associated with the eigenvalue $1$, i.e., $C_iv_i=v_i$, which satisfies $\bm{1}_{n_i}^{\rm{T}}v_i=n_i$.

\section{Convergence Analysis}\label{sec.convergence}
For notational convenience, define
\begin{equation*}
\begin{aligned}
\bar{\bm{\psi}}_i&=\frac{1}{n_i}(\mathbf{1}_{n_i}^{\rm{T}}{\otimes}I_{n_i}\big){\bm{\psi}}_i\in\mathbb{R}^{n_i},\\
\bar{\mathcal{P}}_i(\cdot)&=\frac{1}{n_i}(\mathbf{1}_{n_i}^{\rm{T}}{\otimes}I_{n_i}){\mathcal{P}_i}(\cdot)\in\mathbb{R}^{n_i}
\end{aligned}
\end{equation*}
Since
$
\bm{\psi}_i(t_0)=\mathcal{P}_i(\bm{\xi}_i(t_0))$ and  $\bm{1}_{n_i}^{\rm{T}}C_i=\bm{1}_{n_i}^{\rm{T}}$, one can derive from  (\ref{eq.law.psi.i}) that
\begin{equation}\label{eq.bar.psi.P.i}
\bar{\bm{\psi}}_i(k)=\bar{\mathcal{P}}_i(\bm{\xi}_i(k)),\quad\forall k\in\mathbb{Z}.
\end{equation}
Noting by definition that
\begin{equation}\label{eq.bar.P.i.gradient}
\bar{\mathcal{P}}_i(\mathbf{1}_{n_i}{\otimes}\bm{x})=\frac{1}{n_i}\cdot\frac{\partial f_i}{\partial \bm{x}_i}(\bm{x}),
\end{equation}
the relation (\ref{eq.bar.psi.P.i})  is quite critical for the agents to estimate the partial derivations of the cost functions.
\subsection{Analysis on steady states}
In the following, we will provide some interpretations of the algorithm design from the perspective of steady states.
Suppose that the algorithm variables $\bm{x}_i$, $\bm{\psi}_i$ and $\bm{\xi}$ respectively converge to some points $\bm{x}_i(\infty)$, $\bm{\psi}_i(\infty)$ and $\bm{\xi}(\infty)$.
Then, from (\ref{eq.law.x.i})-(\ref{eq.law.xi}), the steady states satisfies
\begin{align}
&\bm{x}_i({\infty})=R_i\bm{x}_i(\infty){-}\frac{\alpha}{n_i}(I_{n_i}{\otimes}\bm{1}_{n_i}^{\rm{T}}){\bm{\psi}}_i(\infty),\label{eq.law.x.i.steady}\\
&{\bm{\psi}}_i(\infty)=(C_i{\otimes} I_{n_i}){\bm{\psi}}_i(\infty),\label{eq.law.psi.i.steady}\\
&\Gamma(\mathcal{L}{\otimes} I_{n_\text{sum}}+A_d)\big(\bm{\xi}(\infty){-}\bm{1}_{n_\text{sum}}{\otimes}\bm{x}(\infty)\big)=0.\label{eq.law.xi.steady}
\end{align}
From (\ref{eq.law.psi.i.steady}), one has 
$\bm{\psi}_i(\infty)=v_i\otimes {\delta_i},
$
where ${\delta_i}$ is a constant vector to be determined later. Since
\begin{equation*}
(\mathbf{1}_{n_i}^{\rm{T}}{\otimes}I_{n_i})\bm{\psi}_i(\infty)=(\mathbf{1}_{n_i}^{\rm{T}}{\otimes}I_{n_i})(v_i\otimes {\delta_i})=n_i{\delta_i},
\end{equation*}
one has $
{\delta_i}=\bar{\bm{\psi}}_i(\infty)$, and therefore,
\begin{equation}\label{eq.psi.i.infty}
\bm{\psi}_i(\infty)=v_i\otimes\bar{\bm{\psi}}_i(\infty).
\end{equation}
{Note that under Assumption 1, $\Gamma$ is a diagonal positive-definite matrix, and $(\mathcal{L}{\otimes} I_{n_\text{sum}}+A_d)$ is a non-singular $M$-matrix (see Lemma 1 in \cite{zhangkj_tcasii}). Therefore, $\Gamma(\mathcal{L}{\otimes} I_{n_\text{sum}}+A_d)$ is non-singular. Then,}
from (\ref{eq.law.xi.steady}), one has
\begin{equation}\label{eq.xi.i.infty}
\bm{\xi}(\infty)=\bm{1}_{n_\text{sum}}{\otimes}\bm{x}(\infty).
\end{equation}
Combining (\ref{eq.bar.psi.P.i}), (\ref{eq.bar.P.i.gradient}), (\ref{eq.psi.i.infty}) and (\ref{eq.xi.i.infty}) yields
\begin{equation}
\bm{\psi}_i(\infty)=v_i\otimes\left(\frac{1}{n_i}\cdot\frac{\partial f_i}{\partial \bm{x}_i}(\bm{x}(\infty)\right).
\end{equation}
Noticing that $u_i^{\rm{T}}R_i=u_i^{\rm{T}}$, multiplying $u_i^{\rm{T}}$ on both sides on equation (\ref{eq.law.x.i.steady}) yields
$u_i^{\rm{T}}(I_{n_i}{\otimes}\bm{1}_{n_i}^{\rm{T}}){\bm{\psi}}_i(\infty)=0.
$
Combining this with (\ref{eq.psi.i.infty}), one can derive that
\begin{equation}\label{eq.bar.psi.inf.2}
\bm{1}_{n_i}^{\rm{T}}\bar{\bm{\psi}}_i(\infty)=0,
\end{equation}
since $u_i^{\rm{T}}v_i\neq 0$.
From (\ref{eq.psi.i.infty}) and (\ref{eq.bar.psi.inf.2}), one has $(I_{n_i}{\otimes}\bm{1}_{n_i}^{\rm{T}}){\bm{\psi}}_i(\infty)=v_i\otimes(\bm{1}_{n_i}^{\rm{T}}\bar{\bm{\psi}}_i(\infty))=0$, substituting which back into (\ref{eq.law.x.i.steady}) yields
\begin{equation}
\bm{x}_i(t_{\infty})=R_i\bm{x}_i(\infty),
\end{equation}
which implies that
$
\bm{x}_i(\infty)=\tau_i\bm{1}_{n_i} ,
$
where $\tau_i$ is a constant to be determined later. Since
$
u_{i}^{\rm{T}}\bm{x}_i(\infty)=\tau_iu_i^{\rm{T}}\mathbf{1}_{n_i}=\tau_i n_i,
$
one has 
\begin{equation}\label{eq.x.i.infty}
\bm{x}_i(\infty)=\mathbf{1}_{n_i}\frac{u_i^{\rm{T}}\bm{x}_i(\infty)}{n_i}.
\end{equation}
which implies that $x_{ij}(\infty),\forall j\in\mathcal{V}_i$ reach an agreement with the consensus value  $\frac{u_i^{\rm{T}}\bm{x}_i(\infty)}{n_i}$.

Combining (\ref{eq.bar.psi.inf.2}) with (\ref{eq.bar.psi.P.i}) (\ref{eq.bar.P.i.gradient}) (\ref{eq.xi.i.infty}), one has
\begin{equation}
\bm{1}_{n_i}^{\rm{T}}\frac{\partial f_i}{\partial \bm{x}_i}(\bm{x}(\infty))=0,
\end{equation}
which implies  that
$
\bm{x}(\infty)=\bm{x}^*.
$

\subsection{Error system construction}

Define 
$$
\bar{x}_i=\frac{u_i^{\rm{T}}\bm{x}_i}{n_i}\in\mathbb{R},\quad\bar{X}_i=\bm{1}_{n_i}\bar{x}_i\in\mathbb{R}^{n_i}$$
and  $\bm{\bar{x}}=[\bar{x}_1,\cdots,\bar{x}_N]^{\rm{T}}\in\mathbb{R}^N$, $\bm{\bar{X}}=[\bar{X}_1^{\rm{T}},\cdots,\bar{X}_N^{\rm{T}}]^{\rm{T}}\in\mathbb{R}^{n_{\text{sum}}}$.
Based on the previous analysis,  we define the errors
\begin{equation*}
\begin{aligned}
\bm{e_{\psi_i}}(t)&=\bm{\psi}_i(t){-}v_i\otimes\bar{\bm{\psi}}_i(t)\in\mathbb{R}^{n_i^2},\\
{e_{\bar{x}_i}}(t)&=\bar{x}_i(t){-}y_i^*\in\mathbb{R},\\
	\bm{e_{x_i}}(t)&=\bm{x}_i(t){-}\bar{X}_i(t)\in\mathbb{R}^{n_i},\\
\bm{e_{\xi_i}}(t)&=\bm{\xi}_i(t){-}\bm{1}_{n_i}{\otimes}\bar{\bm{X}}(t)\in\mathbb{R}^{n_in_\text{sum}},\\
\end{aligned}
\end{equation*}
and $\bm{e_\psi}=[\bm{e_{\psi_1}}^{\rm{T}},\bm{e_{\psi_2}}^{\rm{T}},{\cdots},\bm{e_{\psi_N}}^{\rm{T}}]^{\rm{T}}$, 
  $\bm{e_{\bar{x}}}=[{e_{\bar{x}_1}},{\cdots},{e_{\bar{x}_N}}]^{\rm{T}}$,  $\bm{e_x}=[\bm{e_{x_1}}^{\rm{T}},\bm{e_{x_2}}^{\rm{T}},{\cdots},\bm{e_{x_N}}^{\rm{T}}]^{\rm{T}}$,  $\bm{e_\xi}=[\bm{e_{\xi_1}}^{\rm{T}},\bm{e_{\xi_2}}^{\rm{T}},{\cdots},\bm{e_{\xi_N}}^{\rm{T}}]^{\rm{T}}$.

\paragraph{The iteration of $\bm{e_{\psi_i}}$}
From the iteration of $\bm{{\psi_i}}$ in (\ref{eq.law.psi.i}), one can derive that:
\begin{equation}\label{eq.e_psi.i.k+1.k}
\begin{aligned}
&\bm{e_{\psi_i}}(k+1)\\
=&(C_i{\otimes} I_{n_i}){\bm{\psi}}_i(k)+{\mathcal{P}}_i(\bm{\xi}_i(k+1)){-}{\mathcal{P}}_i(\bm{\xi}_i({k}))\\
&{-}v_i\otimes\Big(\frac{1}{n_i}(\bm{1}_{n_i}^{\rm{T}}{\otimes} I_{n_i})\Big((C_i{\otimes} I_{n_i}){\bm{\psi}}_i(k)\\
&+{\mathcal{P}}_i(\bm{\xi}_i(k+1)){-}{\mathcal{P}}_i(\bm{\xi}_i({k}))\Big)\Big)\\
=&(C_i{\otimes} I_{n_i}){\bm{\psi}}_i(k){-}v_i{\otimes}\bar{\bm{\psi}}_i(k)+{\mathcal{P}}_i(\bm{\xi}_i(k+1)){-}{\mathcal{P}}_i(\bm{\xi}_i({k}))\\
&{-}v_i\otimes\Big(\frac{1}{n_i}(\bm{1}_{n_i}^{\rm{T}}{\otimes} I_{n_i})\big({\mathcal{P}}_i(\bm{\xi}_i(k+1)){-}{\mathcal{P}}_i(\bm{\xi}_i({k}))\big)\Big)\\
=&(C_i{\otimes} I_{n_i})\bm{e_{\psi_i}}(k)
+{\mathcal{P}}_i(\bm{\xi}_i(k+1)){-}{\mathcal{P}}_i(\bm{\xi}_i({k}))\\
&{-}\left(\frac{v_i\bm{1}_{n_i}^{\rm{T}}}{n_i}{\otimes} I_{n_i}\right)\big({\mathcal{P}}_i(\bm{\xi}_i(k+1)){-}{\mathcal{P}}_i(\bm{\xi}_i({k}))\big)\\
=&\left(\bar{C}_i{\otimes} I_{n_i}\right)\bm{e_{\psi_i}}(k)+\left(\bar{I}_{v_i}{\otimes} I_{n_i}\right)\big({\mathcal{P}}_i(\bm{\xi}_i(k+1)){-}{\mathcal{P}}_i(\bm{\xi}_i({k}))\big),
\end{aligned}
\end{equation}
where 
$
\bar{C}_i=C_i{-}\frac{v_i\bm{1}_{n_i}^{\rm{T}}}{n_i},
$
and 
$
\bar{I}_{v_i}=I_{n_i}{-}\frac{v_i\bm{1}_{n_i}^{\rm{T}}}{n_i},
$
and the last equality is obtained by noticing that 
$$(\frac{v_i\bm{1}_{n_i}^{\rm{T}}}{n_i}{\otimes} I_{n_i})\bm{e_{\psi_i}}(k)=0.$$
Under Assumption \ref{assp.graph}, $1$ is a simple eigenvalue of $C_i$, and the other eigenvalues, denoted by $\lambda_2,\cdots,\lambda_{n_i}$, are within the unit circle, i.e., $|\lambda_i|<1,\forall i=2,\cdots,n_i$. It is not difficult to verify that   
$0,\lambda_2,\cdots,\lambda_{n_i}$ are  the eigenvalues of $\bar{C}_i$. Therefore, $\bar{C}_i$ is a Schur matrix, which means that there exists a symmetric
positive definite matrix $W_{c_i}$ such that
$\bar{C}_i^{\rm{T}} W_{c_i}\bar{C}_i {-}W_{c_i} = {-}  I_{n_i}$.

\paragraph{The iteration of ${e_{\bar{x}_i}}$}
From the iteration of $\bm{x_i}$ in (\ref{eq.law.x.i}), one has
\begin{equation}\label{eq.e_bar.x.i.k+1.k}
\begin{aligned}
&{e_{\bar{x}_i}}(k+1)\\
=&\frac{u_i^{\rm{T}}}{n_i}\left(R_i\bm{x}_i(k){-}\frac{\alpha}{n_i}(I_{n_i}{\otimes}\bm{1}_{n_i}^{\rm{T}}){\bm{\psi}}_i(k)\right){-}{y}_i^*\\
=&{e_{\bar{x}_i}}({k}){-}\frac{\alpha}{n_i^2}u_i^{\rm{T}}(I_{n_i}{\otimes}\bm{1}_{n_i}^{\rm{T}}){\bm{\psi}}_i(k).
\end{aligned}
\end{equation}

\paragraph{The iteration of $\bm{e_{x_i}}$}
From the iteration of $\bm{x_i}$ in (\ref{eq.law.x.i}), one can also obtain
\begin{equation}\label{eq.e_x.i.k+1.k}
\begin{aligned}
&\bm{e_{x_i}}(k+1)\\
=&(I_{n_i}{-}\frac{\bm{1}_{n_i}u_i^{\rm{T}}}{n_i})\left(R_i\bm{x}_i(k){-}\frac{{\alpha}}{n_i}(I_{n_i}{\otimes}\bm{1}_{n_i}^{\rm{T}}){\bm{\psi}}_i(k)\right)\\
=&\bar{R}_{i}\bm{e_{x_i}}(k){-}\frac{\alpha}{n_i}\bar{I}_{u_i}{(I_{n_i}{\otimes}\bm{1}_{n_i}^{\rm{T}})}{\bm{\psi}}_i(k),
\end{aligned}
\end{equation}
where $\bar{R}_i=R_{i}{-}\frac{\bm{1}_{n_i}u_i^{\rm{T}}}{n_i}$, $\bar{I}_{u_i}=I_{n_i}{-}\frac{\bm{1}_{n_i}u_i^{\rm{T}}}{n_i}$.
Similar to the analysis on $\bar{C}_i$,
one can derive that $\bar{R}_i$ is a Schur matrix, which means that there exists a symmetric
positive definite matrix $W_{R_i}$ such that
$\bar{R}_i^{\rm{T}} W_{R_i}\bar{R}_i {-}W_{R_i} = {-}  I_{n_i}$.


\paragraph{The iteration of $\bm{e_\xi}$}
From (\ref{eq.law.x.i}), one has
\begin{equation}
\begin{aligned}
&\bar{{X}}_i(k+1)=\frac{\bm{1}_{n_i}u_i^{\rm{T}}}{n_i}\bm{x}_i(k+1)\\
=&\frac{\bm{1}_{n_i}u_i^{\rm{T}}}{n_i}\big(R_i\bm{x}_i(k){-}\frac{\alpha}{n_i}(I_{n_i}{\otimes}\bm{1}_{n_i}^{\rm{T}}){\bm{\psi}}_i(k)\big)\\
=&\bar{X}_i(k){-}\alpha h_i(k),
\end{aligned}
\end{equation}
where $h_i=\frac{\bm{1}_{n_i}u_i^{\rm{T}}}{n_i^2}(I_{n_i}{\otimes}\bm{1}_{n_i}^{\rm{T}}){\bm{\psi}}_i$, which can be further collectively rewritten as  
\begin{equation}\label{eq.bar.X}
\bar{X}(k+1)=\bar{X}({k}){-}\alpha h(k),
\end{equation}
where $h=[h_1^{\rm{T}},\cdots,h_N^{\rm{T}}]^{\rm{T}}$.
Combining (\ref{eq.law.xi})  and (\ref{eq.bar.X}), one can derive the iteration of $\bm{e_\xi}$
\begin{equation}\label{eq.e_xi.k+1.k}
\begin{aligned}
&\bm{e_\xi}(k+1)
=\bm{\xi}(k+1){-}\bm{1}_{n_{\text{sum}}}{\otimes}   \bar{\bm{X}}(k+1)\\
=&\bm{\xi}(k){-}\Gamma(\mathcal{L}{\otimes} I_{n_\text{sum}}+A_d)\big(\bm{\xi}(k){-}\bm{1}_{n_\text{sum}}{\otimes}\bm{x}(k)\big)\\
&{-}\bm{1}_{n_{\text{sum}}}{\otimes}\left(\bar{X}({k}){-}\alpha h(k)\right) \\
=&\bm{e_\xi}(k){-}\Gamma(\mathcal{L}{\otimes I_{n_\text{sum}}}+A_d)(\bm{e_\xi}(k){-}\bm{1}_{n_{\text{sum}}}{\otimes}\bm{e_{{x}}}(k))\\
&+\bm{1}_{n_\text{sum}}{\otimes}\big(\alpha h(k)\big)\\
=&\mathcal{M}\bm{e_\xi}(k)+\Gamma(\mathcal{L}{\otimes I_{n_\text{sum}}}+A_d)\big(\bm{1}_{n_{\text{sum}}}{\otimes}\bm{e_x}(k)\big)\\
&+\bm{1}_{n_\text{sum}}{\otimes}\big(\alpha h(k)\big),
\end{aligned}
\end{equation}
where $\mathcal{M}=I_{n_{\text{sum}}^2}{-}\Gamma(\mathcal{L}{\otimes}I_{n_{\text{sum}}}+A_d)$.
{Clearly,	 $1$ is not an eigenvalue of $\mathcal{M}$, due to the invertibility of  $\Gamma(\mathcal{L}{\otimes}I_{n_{\text{sum}}}+A_d)$ under Assumption \ref{assp.graph}  \cite{zhangkj_tcasii}. It follows  from the Gershgorin’s Circle Theorem that  $\mathcal{M}$  is a Schur matrix.}
Then, there exist a symmetric
positive definite matrices $W_\mathcal{M}$ such that
$\mathcal{M}^{\rm{T}} W_\mathcal{M}\mathcal{M} {-}W_\mathcal{M} = {-}  I_{n_{\text{sum}}^2}$.

\subsection{Convergence of the error system}
Now, we are {in the position} to present the main {results}.
\begin{theorem}
Suppose that Assumptions \ref{assp.graph}, \ref{assp.fij.lipschitz} and \ref{assp.Q} hold. The agent states will converge to the NE {with a linear rate} under the proposed algorithm (\ref{eq.law.x.ij}) (\ref{eq.law.psi.ijil}) and (\ref{eq.law.xi.ijpq}), by choosing $\alpha$ satisfying
\begin{equation}\label{eq.alpha}
\begin{aligned}
{\alpha}{\leq}\min\{\frac{l}{2N\sigma},\frac{\gamma_1}{8\sigma},\frac{\gamma_2}{8\sigma},1\},
\end{aligned}
\end{equation}
where
\begin{equation*}
\begin{aligned}
\gamma_1=&4\beta_{\bar{x}\psi},\\
\gamma_2=&4(\beta_{\bar x\xi}+\beta_{\psi\xi}\gamma_1),\\
\gamma_3=&4(\beta_{\psi x}\gamma_1+\beta_{\xi x}\gamma_2),\\
\sigma=&\max\limits_{i}\{b_{1i}\}+\gamma_2\max\limits_{i}\{b_{2i}\}+\gamma_3\max\limits_{i}\{b_{3i}\},
\end{aligned}
\end{equation*}
with
\begin{equation*}
\begin{aligned}
\beta_{\psi\xi}=&2\max_{i,j}\{(2\|\bar{C}_i^{\rm{T}}W_{c_i}\bar{I}_i\|^2+\|\bar{I}_i^{\rm{T}} W_{c_i} \bar{I}_i\|)l_{ij}^2\}\\
&\times\|\Gamma(\mathcal{L}{\otimes} I_{n_\text{sum}}+A_d)\|^2,\\
\beta_{\psi x}=&n_{\text{sum}}\beta_{\psi\xi},\\
\beta_{\xi x}=&n_{\text{sum}}(4\|\mathcal{M}^{\rm{T}} W_\mathcal{M}\|^2+2\|W_\mathcal{M}\|)\|\Gamma(\mathcal{L}{\otimes} I_{n_\text{sum}}+A_d)\|^2,\\
\beta_{\bar x\psi}=&\frac{2}{l}\max_{i}\{\frac{n_i^3\|u_i\|^2}{ (u_i^{\rm{T}}v_i)^2}\} ,\\
 \beta_{\bar x\xi}=&\frac{2\max\limits_{i}\{{n_i\sum_{j=1}^{n_i}l_{ij}^2}\}}{l},\\
 b_{0i}=&{n_i+(\frac{1}{n_i}+\max\limits_{i}\{n_i\})\|v_i\|^2{\sum_{j=1}^{n_i}l_{ij}^2}},\\
 b_{1i}=&\frac{\|u_i\|^2}{n_i u_i^{\rm{T}}v_i}b_{0i},\\
 b_{3i}=&\frac{1}{n_i^2}(2\|\bar{R}_{i}^{\rm{T}}W_{R_i}\bar{I}_{u_i}\|^2+\|\bar{I}_{u_i}^{\rm{T}}W_{R_i}\bar{I}_{u_i}\|)b_{0i},\\ b_{2i}=&{n_{\text{sum}}}\big(4\|\mathcal{M}^{\rm{T}} W_\mathcal{M}\|^2+2\|W_\mathcal{M}\|\big)\big\|\frac{\bm{1}_{n_i}u_i^{\rm{T}}}{n_i^2}\|^2b_{0i}.\\
\end{aligned}
\end{equation*}

\end{theorem}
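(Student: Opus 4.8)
The plan is to exhibit a Lyapunov function built from the four error coordinates $\bm{e_{\bar{x}}},\bm{e_\psi},\bm{e_\xi},\bm{e_x}$ and to show it contracts at a fixed geometric rate once $\alpha$ satisfies \eqref{eq.alpha}; the constants $\gamma_1,\gamma_2,\gamma_3,\sigma$ and the various $\beta$'s and $b$'s in the statement are precisely what this bookkeeping produces. Concretely I would take
\[
\begin{aligned}
V=&\sum_i\tfrac{n_i^3}{u_i^Tv_i}e_{\bar{x}_i}^2+\gamma_1\sum_i\bm{e_{\psi_i}}^T(W_{c_i}{\otimes}I_{n_i})\bm{e_{\psi_i}}\\
&+\gamma_2\,\bm{e_\xi}^TW_\mathcal{M}\bm{e_\xi}+\gamma_3\sum_i\bm{e_{x_i}}^TW_{R_i}\bm{e_{x_i}},
\end{aligned}
\]
with $W_{c_i},W_\mathcal{M},W_{R_i}$ the solutions of the Lyapunov equations for the Schur matrices $\bar{C}_i,\mathcal{M},\bar{R}_i$ recorded above and $\gamma_1,\gamma_2,\gamma_3>0$ to be chosen. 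The weight $n_i^3/(u_i^Tv_i)$ on $e_{\bar{x}_i}^2$ is the key ingredient in the first block: after multiplying \eqref{eq.e_bar.x.i.k+1.k} by it, the coalition-dependent effective step sizes $\alpha u_i^Tv_i/n_i^3$ all collapse to the common factor $\alpha$, so that, writing $u_i^T(I_{n_i}{\otimes}\bm{1}_{n_i}^T)\bm{\psi}_i=u_i^T(I_{n_i}{\otimes}\bm{1}_{n_i}^T)\bm{e_{\psi_i}}+(u_i^Tv_i)\bm{1}_{n_i}^T\bar{\bm{\psi}}_i$ and using \eqref{eq.bar.psi.P.i} and \eqref{eq.bar.P.i.gradient} together with the identity $\bm{1}_{n_i}^T\bar{\mathcal{P}}_i(\bm{1}_{n_i}{\otimes}\bar{\bm{X}})=\tfrac1{n_i}\tfrac{\partial g_i}{\partial y_i}(\bar{\bm{x}})$ (a consequence of \eqref{eq.def.gi}), the dominant part of the change in the first block becomes $-2\alpha\,\bm{e_{\bar{x}}}^T(\mathcal{Q}(\bar{\bm{x}})-\mathcal{Q}(\bm{y}^*))\le-2\alpha l\|\bm{e_{\bar{x}}}\|^2$ by Assumption~\ref{assp.Q} and \eqref{eq.proper.y*.g}.

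\emph{Per-block estimates.} For the $\bm{\psi}$, $\bm{\xi}$ and $\bm{x}$ blocks I would expand \eqref{eq.e_psi.i.k+1.k}, \eqref{eq.e_xi.k+1.k} and \eqref{eq.e_x.i.k+1.k}, use the Lyapunov identities to extract the fixed-rate negative parts $-\|\bm{e_\psi}\|^2$, $-\|\bm{e_\xi}\|^2$, $-\|\bm{e_x}\|^2$, and split each cross term by Young's inequality. The gradient increments $\mathcal{P}_i(\bm{\xi}_i(k{+}1)){-}\mathcal{P}_i(\bm{\xi}_i(k))$ are bounded via the $l_{ij}$ (Assumption~\ref{assp.fij.lipschitz}) and the relation $\bm{\xi}(k{+}1){-}\bm{\xi}(k)=-\Gamma(\mathcal{L}{\otimes}I_{n_{\text{sum}}}{+}A_d)(\bm{e_\xi}(k){-}\bm{1}_{n_{\text{sum}}}{\otimes}\bm{e_x}(k))$ read off from \eqref{eq.law.xi}, which is where $\beta_{\psi\xi},\beta_{\psi x},\beta_{\xi x}$ originate (the $\bm{\xi}$ block additionally carrying an $\alpha^2\|h\|^2$ term and the $\bm{x}$ block an $\alpha$-scaled cross-term involving $(I_{n_i}{\otimes}\bm{1}_{n_i}^T)\bm{\psi}_i$). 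The point that keeps every estimate homogeneous -- no additive constant surviving -- is that $(I_{n_i}{\otimes}\bm{1}_{n_i}^T)\bm{\psi}_i$, $u_i^T(I_{n_i}{\otimes}\bm{1}_{n_i}^T)\bm{\psi}_i$ and $h_i$ all vanish at the NE: using $\bm{\psi}_i=\bm{e_{\psi_i}}+v_i{\otimes}\bar{\bm{\psi}}_i$, $\bm{1}_{n_i}^T\bar{\bm{\psi}}_i=\tfrac1{n_i}\tfrac{\partial g_i}{\partial y_i}(\bar{\bm{x}})+\bm{1}_{n_i}^T(\bar{\mathcal{P}}_i(\bm{\xi}_i){-}\bar{\mathcal{P}}_i(\bm{1}_{n_i}{\otimes}\bar{\bm{X}}))$ and $\tfrac{\partial g_i}{\partial y_i}(\bar{\bm{x}})=\tfrac{\partial g_i}{\partial y_i}(\bar{\bm{x}}){-}\tfrac{\partial g_i}{\partial y_i}(\bm{y}^*)$, one obtains $\|(I_{n_i}{\otimes}\bm{1}_{n_i}^T)\bm{\psi}_i\|^2\le b_{0i}(\|\bm{e_{\psi_i}}\|^2{+}\|\bm{e_{\xi_i}}\|^2{+}\|\bm{e_{\bar{x}}}\|^2)$, which is exactly what defines $b_{0i}$ and hence $b_{1i},b_{2i},b_{3i}$.

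\emph{Assembling $\Delta V$ and fixing the constants.} After summing the blocks, the first block also leaves couplings $\beta_{\bar{x}\psi}\|\bm{e_\psi}\|^2+\beta_{\bar{x}\xi}\|\bm{e_\xi}\|^2$ (one copy of $\alpha l\|\bm{e_{\bar{x}}}\|^2$ having been spent in the Young splits). I would fix the weights in a cascade: $\gamma_1=4\beta_{\bar{x}\psi}$ so that $-\tfrac{\gamma_1}{2}\|\bm{e_\psi}\|^2$ dominates the $\bm{e_\psi}$-leakage with room to spare; $\gamma_2=4(\beta_{\bar{x}\xi}+\gamma_1\beta_{\psi\xi})$ so that $-\tfrac{\gamma_2}{2}\|\bm{e_\xi}\|^2$ dominates the $\bm{e_\xi}$-leakage from the first and the $\bm{\psi}$ block; $\gamma_3=4(\gamma_1\beta_{\psi x}+\gamma_2\beta_{\xi x})$ likewise for the $\bm{e_x}$-leakage from the $\bm{\psi}$ and $\bm{\xi}$ blocks. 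What then remains are the $\bm{\psi}_i$-dependent remainders, all entering at order $\alpha^2$ and, by the previous paragraph, bounded by $\alpha^2\sigma(\|\bm{e_\psi}\|^2{+}\|\bm{e_\xi}\|^2{+}N\|\bm{e_{\bar{x}}}\|^2)$ with $\sigma=\max_i b_{1i}+\gamma_2\max_i b_{2i}+\gamma_3\max_i b_{3i}$; then $\alpha\le l/(2N\sigma)$ absorbs the $\|\bm{e_{\bar{x}}}\|^2$ part into half of the remaining $\alpha l\|\bm{e_{\bar{x}}}\|^2$, while $\alpha\le1$ together with $\alpha\le\gamma_1/(8\sigma)$ and $\alpha\le\gamma_2/(8\sigma)$ absorbs the $\|\bm{e_\psi}\|^2$ and $\|\bm{e_\xi}\|^2$ parts into the leftovers of the $\gamma_1,\gamma_2$ contractions. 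This gives $V(k{+}1)\le(1-\rho)V(k)$ for some $\rho\in(0,1)$, hence $V(k)\to0$ geometrically; since $V$ is equivalent to $\|\bm{e_{\bar{x}}}\|^2{+}\|\bm{e_\psi}\|^2{+}\|\bm{e_\xi}\|^2{+}\|\bm{e_x}\|^2$, all errors vanish linearly, and then $\bm{x}=\bar{\bm{X}}{+}\bm{e_x}$ with $\bar{\bm{X}}=[\bar{x}_1\bm{1}_{n_1}^T,\dots,\bar{x}_N\bm{1}_{n_N}^T]^T$ and $\bar{\bm{x}}\to\bm{y}^*$ give $\bm{x}(k)\to\bm{x}^*$.

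The main obstacle is not any single inequality -- each is a routine Kronecker/Young/Lipschitz manipulation -- but making the four coupled recursions cooperate: (i) choosing the weighted norm on $\bm{e_{\bar{x}}}$ so that strong monotonicity applies cleanly despite the coalition-dependent step sizes; (ii) inserting the decomposition $\bm{\psi}_i=\bm{e_{\psi_i}}+v_i{\otimes}\bar{\bm{\psi}}_i$ and the equilibrium identity $\bm{1}_{n_i}^T\tfrac{\partial f_i}{\partial\bm{x}_i}(\bm{x}^*)=0$ wherever a $\bm{\psi}_i$-term appears, so that no additive constant survives -- otherwise $\Delta V$ acquires an $O(\alpha^2)$ floor and linear convergence to $\bm{x}^*$ is lost; and (iii) ordering the Young splits and the choices of $\gamma_1,\gamma_2,\gamma_3$ so that each block's decay beats the leakage into its own coordinate \emph{before} the smallness condition \eqref{eq.alpha} is invoked to close the loop. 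A minor additional subtlety is the feedback $\bm{e_\xi}\to\alpha h\to\bm{e_\xi}$ in \eqref{eq.e_xi.k+1.k}: since $h$ enters $\Delta V_\xi$ only at order $\alpha^2$, it is harmless for small $\alpha$.
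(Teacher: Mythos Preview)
Your plan is correct and is essentially the paper's own proof: the same composite Lyapunov function, the same per-block estimates (these are exactly Lemmas~\ref{lemma.psi}--\ref{lemma.x} in the appendix), the same cascade choice of $\gamma_1,\gamma_2,\gamma_3$, and the same final absorption via \eqref{eq.alpha}. The one discrepancy worth flagging is the weight on the $e_{\bar{x}_i}^2$ block: the paper takes $W_{\bar{x}}=\diag\{n_i^3/(\alpha\,u_i^Tv_i)\}$, i.e.\ with an explicit $1/\alpha$, whereas you wrote $n_i^3/(u_i^Tv_i)$. With the paper's choice the monotonicity term becomes $-2l\|\bm{e_{\bar{x}}}\|^2$ (not $-2\alpha l$), the couplings $\beta_{\bar{x}\psi},\beta_{\bar{x}\xi}$ are genuinely $\alpha$-free as stated, and the quadratic remainder in that block is $\alpha\max_i b_{1i}(\cdots)$ rather than $\alpha^2$; this is precisely why $\sigma$ in the statement carries $\max_i b_{1i}$ with no extra factor and why $\alpha\le1$ is used to lift the $\alpha^2$ in the $b_{2i},b_{3i}$ terms up to $\alpha$. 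Your variant without $1/\alpha$ would still close, but the constants would not line up exactly with the ones recorded in the theorem.
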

\begin{proof}
Consider the following Lyapunov function
\begin{equation}\label{eq.Lya}
\begin{aligned}
V(t)=&V_{\bar{x}}(t)+\gamma_1 V_\psi(t)+\gamma_2 V_{\xi}(t)+\gamma_3 V_{x}(t),
\end{aligned}
\end{equation}
where 
\begin{equation}\label{eq.Lya.part}
\begin{aligned}
V_{\bar{x}}(t)&=\bm{e_{\bar{x}}}^{\rm{T}}(t)W_{\bar{x}}\bm{e_{\bar{x}}}(t),\\
V_\psi(t)&=\bm{e_\psi}(t)^{\rm{T}}W_{c}\bm{e_{\psi}}(t),\\
V_{\xi}(t)&=\bm{e_\xi}^{\rm{T}}(t)W_{\mathcal{M}}\bm{e_{\xi}}(t),\\
V_{x}(t)&=\bm{e_{x}}^{\rm{T}}(t)W_{R}\bm{e_{x}}(t),
\end{aligned}
\end{equation}
with 
\begin{equation*}
\begin{aligned}
W_{\bar{x}}&=\diag\{\frac{n_1^3}{\alpha u_1^{\rm{T}}v_1},\cdots,\frac{n_N^3}{\alpha u_N^{\rm{T}}v_N}\},\\
W_R&={\diag}\{W_{R_1},\cdots,W_{R_N}\},\\
W_c&=\diag\{W_{c_1}{\otimes}I_{n_1},\cdots,W_{c_N}{\otimes}I_{n_N}\}.
\end{aligned}
\end{equation*}
From Lemmas \ref{lemma.psi}-\ref{lemma.x} in the appendix, one can obtain that 
\begin{equation}
\begin{aligned}
&V(k+1){-}V(k)\\
{\leq}&{-}l\|\bm{e_{\bar{x}}}({k})\|^2{-}\frac{\gamma_1}{4}\|\bm{e_\psi}(k)\|^2{-}\frac{\gamma_2}{4}\|\bm{e_{\xi}}({k})\|^2-\frac{\gamma_3}{4}\|\bm{e_{x}}({k})\|^2\\
&+(\alpha \max\limits_{i}\{b_{1i}\}+\gamma_2\alpha^2\max\limits_{i}\{b_{2i}\}+\gamma_3\alpha^2\max\limits_{i}\{b_{3i}\})\\
&\times(\|{\bm{e_\psi}}(k)\|^2+\|\bm{e_{\xi}}(k)\|^2+N\|\bm{e_{\bar{x}}}(k)\|^2).
\end{aligned}
\end{equation}
Note from (\ref{eq.alpha}) that ${\alpha}{\leq}1$.
Then,
\begin{equation}
\begin{aligned}
&V(k+1){-}V(k)\\
{\leq}&{-}l\|\bm{e_{\bar{x}}}({k})\|^2{-}\frac{\gamma_1}{4}\|\bm{e_\psi}(k)\|^2{-}\frac{\gamma_2}{4}\|\bm{e_{\xi}}({k})\|^2{-}\frac{\gamma_3}{4}\|\bm{e_{x}}({k})\|^2\\
&+\alpha\sigma(N\|\bm{e_{\bar{x}}}(k)\|^2+{\bm{e_\psi}}(k)\|^2+\|\bm{e_{\xi}}(k)\|^2).
\end{aligned}
\end{equation}
Since $\alpha$ satisfies (\ref{eq.alpha}), one can further derive that
\begin{equation}\label{linear rate}
\begin{aligned}
&V(k+1){-}V(k)\\
{\leq}&{-}\frac{l}{2}\|\bm{e_{\bar{x}}}({k})\|^2{-}\frac{\gamma_1}{8}\|\bm{e_\psi}(k)\|^2{-}\frac{\gamma_2}{8}\|\bm{e_{\xi}}({k})\|^2{-}\frac{\gamma_3}{4}\|\bm{e_{x}}({k})\|^2\\
{\leq}&{-}\varepsilon V(k),
\end{aligned}
\end{equation}
where
\begin{equation}
\varepsilon=\min\{\frac{l}{2\|W_{\bar{x}}\|},\frac{1}{8\|W_{c}\|},\frac{1}{8\|W_{\mathcal{M}}\|},\frac{1}{4\|W_{R}\|}\},
\end{equation}
which means that $V$ will asymptotically converge to zero {with a linear rate $O((1-\varepsilon)^k)$}, and that $\bm{x}$  will asymptotically converge to $\bm{x}^*$ {with a linear rate $O((1-\varepsilon)^k)$}, i.e.,  $\lim\limits_{k\rightarrow\infty}\bm{x}(k)=\bm{x}^*$.	$\hfill\blacksquare$

\end{proof}

{
\begin{remark}
Let $h_{ij}(x_{ij},\bm{x}_{-i})\triangleq f_{ij}(x_{ij}\bm{1}_{n_i},\bm{x}_{-i})$, and the consistency-constrained multi coalition game (\ref{eq.problem1}) can be transformed into the game investigated in \cite{mengmin2020}, where the cost of each agent $ij\in\mathcal{V}$ is independent of the states of the other agents in coalition $i$. Thus, the problem formulated in this paper is more general than that in \cite{mengmin2020}.
Moreover, this  paper considers the directed communication topology among the game participants. It is worth mentioning that in \cite{mengmin2020}, each coalition should designate one representative agent, and the topology among the representative agents should be connected. Such a restriction is also removed in this paper, since 
none of the graphs in $\{\mathcal{G}^l=\{\mathcal{V}^l,\mathcal{E}^l\}|\mathcal{V}^l=\{1j_1,2j_2,\cdots,Nj_N\}~ \text{with}~ ij_i\in\mathcal{V}_i, ~ \mathcal{E}^l\subseteq\mathcal{V}^l\times\mathcal{V}^l~ \text{and}~ \mathcal{E}^l\subseteq\mathcal{E}\}$ is required to be connected; 
 while the tradeoff is that the algorithm is of higher order.
\end{remark}
}

{
Partially inspired by \cite{mengmin2020}, the proposed algorithm (\ref{eq.law.x.ij}) and (\ref{eq.law.psi.ijil}) can be further redesigned as the following reduced-order algorithm by letting $\phi_{ij}=\sum_{m=1}^{n_i}\psi^{im}_{ij}$:
\begin{align}
x_{ij}(k+1)=&\sum_{im\in\mathcal{N}_{ij}^{i\text{-in}}}r_{ij}^{im}x_{im}(k){-}\frac{{\alpha}}{n_i}\phi_{ij}(k),\label{eq.law.x.ij1}\\
\phi_{ij}(k+1)=&\sum_{im\in\mathcal{N}_{ij}^{i\text{-in}}}c_{ij}^{im}\phi_{im}(k)+\sum_{l=1}^{n_i}\frac{\partial f_{ij}}{\partial x_{il}}\big(\bm{\xi}_{ij}(k+1)\big)\nonumber\\
&{-}\sum_{l=1}^{n_i}\frac{\partial f_{ij}}{\partial x_{il}}\big(\bm{\xi}_{ij}(k)\big).\label{eq.law.phi.ij}
\end{align}
The following result is straightforward. 
\begin{corollary}
Suppose that Assumptions \ref{assp.graph}, \ref{assp.fij.lipschitz} and \ref{assp.Q} hold. The agent states will converge to the NE {with a linear rate} under the proposed algorithm (\ref{eq.law.x.ij1}) (\ref{eq.law.phi.ij}) and (\ref{eq.law.xi.ijpq}), by choosing $\alpha$ satisfying (\ref{eq.alpha}).
\end{corollary}
}


\section{Discussions}\label{sec.discussion}

\subsection{Comparison to distributed NE seeking in multi-coalition games without consistency constraints}
The distinct feature of the proposed distributed NE seeking algorithm, compared with that in \cite{Pang202012} for  multi-coalition games without consistency constraints, lies in (\ref{eq.law.x.ij}).

For the case without consistency constraints, $\frac{\partial f_i}{\partial x_{ij}}(\bm{x}^*)=0,\forall ij\in\mathcal{V}$. Then, each agent $ij\in\mathcal{V}$ only needs to update its state $x_{ij}$ with the feedback of $\frac{\partial f_i}{\partial x_{ij}}(\bm{x})$. Using $\psi_{ij}^{ij}$ to estimate $\tau_{ij}\frac{\partial f_i}{\partial x_{ij}}(\bm{x})$,
the iteration of $x_{ij}$ can be designed as :
\begin{equation}\label{xijwithout}
x_{ij}({k+1})=x_{ij}(k)-\beta\psi_{ij}^{ij}(k),
\end{equation}
where $\beta$ is a positive constant and $\psi_{ij}^{ij}$ is the auxiliary variable designed in (\ref{eq.law.psi.ijil}). The structure of (\ref{xijwithout}) is consistent with the algorithm in \cite{Pang202012}.

In the consistency-constrained multi-coalition games investigated in this paper, algorithm design for distributed NE computation is quite different and more complicated. In (\ref{eq.law.x.ij}),
weighted average state value of agent $ij$ and its in-neighbors is introduced mainly for intra-coalition state consensus, and the feedback of $\sum_{m=1}^{n_i}\psi_{ij}^{im}$ rather than that of $\psi_{ij}^{ij}$ is constructed mainly for driving the agent states to NE. 

\subsection{A connection between networked games and distributed optimization}
The foregoing sections have presented a fundamental algorithm design framework for distributed NE computation in consistency-constrained multi-coalition games, building a bridge between the networked games and the distributed optimization problems.

Specifically, for the special case that each coalition consists of only one agent, i.e., $n_i=1,\forall i=1,\cdots,N$, rewrite the agent set as $\mathcal{V}=\{1,\cdots,N\}$ for notational brevity. The consistency-constrained multi-coalition game studied in this paper degenerates into the networked game among $N$ individual agents, and the proposed updating law (\ref{eq.law.x.ij}), (\ref{eq.law.psi.ijil}) and (\ref{eq.law.xi.ijpq}) turns into
\begin{equation}\label{xijonecoalition}
\begin{aligned}
x_{i}({k+1})=&x_{i}(k)-\alpha\psi_{i}^{i}(k),\\
\psi_{i}^i({k+1})=&\psi_i^i(k)+\frac{\partial f_i}{\partial x_i}(\bm{\xi}_i({k+1}))-\frac{\partial f_i}{\partial x_i}(\bm{\xi}_i({k})),\\
\xi_i^p({k+1})=&\xi_i^p(k)-\frac{1}{d_i+a_i^p}\bigg(\sum_{l\in\mathcal{N}_i^{\textit{in}}}(\xi_i^p(k)-\xi_l^p(k))\\
&~~~~~~~+a_i^p(\xi_i^p(k)-x_p(k))\bigg),~~\forall p\in\mathcal{V},
\end{aligned}
\end{equation}
where $\bm{\xi}_i=[\xi_i^1,\cdots,\xi_i^N]^{\rm{T}}$, and $\psi_i^i(0)=\frac{\partial f_i}{\partial x_i}(\bm{\xi}_i({0}))$. This is consistent with the discrete-time algorithm in \cite{ye_cyber} by noticing that
$$\psi_i^i(k)\equiv\frac{\partial f_i}{\partial x_i}(\xi_i({k})).$$

For another special case that there is only one coalition, i.e., $N=1$, rewrite the agent set as $\mathcal{V}=\mathcal{V}_1=\{1,\cdots,n\}$ for notational brevity. The consistency-constrained multi-coalition game studied in this paper degenerates into the distributed optimization problem
\begin{equation}\label{problem_opti}
\begin{aligned}
&\min_{x_{i}}\sum_{i=1}^{n}f_{i}(x_{1},\cdots,x_{n}),\\
&{\text{s.t.}}~x_{1}=x_{2}=\cdots=x_{n},
\end{aligned}
\end{equation} 
and the proposed algorithm turns into
\begin{equation}\label{algorithm3}
	\begin{aligned}
x_{i}(k+1)=&\sum_{m\in\mathcal{N}_{i}^{\text{in}}}r_{i}^{m}x_{m}(k){-}\frac{{\alpha}}{n}\sum_{m=1}^{n}\psi^{m}_{i}(k),\\
\psi^{l}_{i}(k+1)=&\sum_{m\in\mathcal{N}_{i}^{\text{in}}}c_{i}^{m}\psi_{m}^{l}(k)+\frac{\partial f_{i}}{\partial x_{l}}\big(\bm{\xi}_{i}(k+1)\big)\\
&{-}\frac{\partial f_{i}}{\partial x_{l}}\big(\bm{\xi}_{i}(k)\big),~~\forall l\in\mathcal{V},\\
\xi_{i}^{p}(k+1)=&\xi_{i}^ {p}(k){-}\frac{1}{d_{i}+a_{i}^{p}}\bigg(\sum\limits_{l\in\mathcal{N}_{i}^{\text{in}}}\Big(\xi_{i}^{p}(k){-}\xi_{l}^{p}(k)\Big)\\
&+a_{i}^{p}\Big(\xi_{i}^{p}(k){-}x_{p}(k)\Big)\bigg),~~\forall p\in\mathcal{V},
	\end{aligned}
\end{equation} 
where $\bm{\xi}_i=[\xi_i^1,\cdots,\xi_i^N]^{\rm{T}}$, and $\psi_i^l(0)=\frac{\partial f_i}{\partial x_l}(\bm{\xi}_i({0}))$.

{
Let $h_i(x_i)\triangleq f_i(x_i,\cdots,x_i)$.	
Then, problem (\ref{problem_opti}) has the same solution with the following well-studied distributed optimization problem
\begin{equation}\label{problem_opti2}
\begin{aligned}
&\min_{x_{i}}\sum_{i=1}^{n}h_{i}(x_{i}),\\
&{\text{s.t.}}~x_{1}=x_{2}=\cdots=x_{n}.
\end{aligned}
\end{equation}
In (\ref{problem_opti2}), the local cost function $h_i(x_i)$ of agent $i$ is no longer contingent on the states of other agents, indicating that the state estimation $\xi_i^p,\forall p\in\mathcal{V}$ is not needed. Then, the algorithm (\ref{algorithm3}) turns into the following form to solve the problem (\ref{problem_opti2}):
\begin{equation}\label{algorithm4}
\begin{aligned}
x_{i}(k+1)=&\sum_{m\in\mathcal{N}_{i}^{\text{in}}}r_{i}^{m}x_{m}(k){-}\frac{{\alpha}}{n}\sum_{m=1}^{n}\psi^{m}_{i}(k),\\
\psi^{l}_{i}(k+1)=&\sum_{m\in\mathcal{N}_{i}^{\text{in}}}c_{i}^{m}\psi_{m}^{l}(k)+\frac{\partial f_{i}}{\partial x_{l}}\big(x_{i}(k+1),\cdots,x_i(k+1)\big)\\
&{-}\frac{\partial f_{i}}{\partial x_{l}}\big(x_{i}(k),\cdots,x_i(k)\big),~~\forall l\in\mathcal{V}.\\
\end{aligned}
\end{equation} 
By defining $y_i=\sum_{m=1}^{n}\psi_i^m$ and noting
$\frac{dh_i(x_i)}{dx_i}=\sum_{l=1}^{n}\frac{\partial f_i(x_i,\cdots,x_i)}{\partial x_i}$, one can finally derive from (\ref{algorithm4}) the following algorithm for problem (\ref{problem_opti2}).
\begin{equation}\label{algorithm5}
\begin{aligned}
x_{i}(k+1)=&\sum_{m\in\mathcal{N}_{i}^{\text{in}}}r_{i}^{m}x_{m}(k){-}\frac{{\alpha}}{n}y_i(k),\\
y_{i}(k+1)=&\sum_{m\in\mathcal{N}_{i}^{\text{in}}}c_{i}^{m}y_m(k)+\frac{d h_{i}\big(x_{i}(k+1)\big)}{d x_{i}}{-}\frac{dh_{i}\big(x_{i}(k)\big)}{d x_{i}}.
\end{aligned}
\end{equation} 
\begin{remark}
The difference between the derived algorithm (\ref{algorithm5}) and the push-pull algorithm in \cite{pushi} lies in the second term of the updating of $x_i$. Specifically, in the push-pull algorithm \cite{pushi}, the updating of $x_i$ uses the values of $y_m,\forall m\in\mathcal{N}_i^{\text{in}}$. As a result, the push-pull algorithm \cite{pushi} requires that agent $j,j\in\mathcal{V}$ transmits the values of $x_j$, $y_j$, and $c_m^jy_j$ to its out-neighbor $m,\forall m\in\mathcal{N}_j^{\text{out}}$. In contrast, only the values of $x_j$ and $c_m^jy_j$ need to be transmitted from agent $j$ to its out-neighbor $m,\forall m\in\mathcal{N}_j^{\text{out}}$ in
the derived algorithm (\ref{algorithm5}). 
Thus, compared with the push-pull algorithm in \cite{pushi}, the derived algorithm (\ref{algorithm5}) reduces the communication burden since less information flows are required.
\end{remark} 
}

\section{Numerical simulations}\label{sec.simulation}
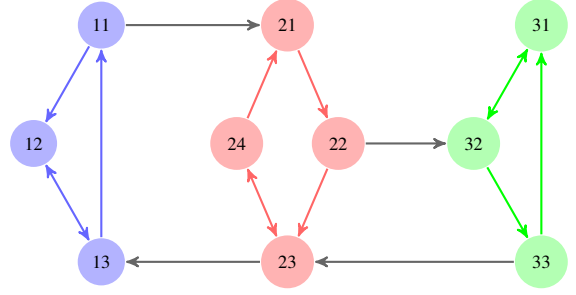
\begin{figure}
\centering
\begin{tikzpicture}[->,>=stealth',auto,node distance=2cm,scale=0.9,
thick, 
node1/.style={circle,fill=blue!30,align=center,minimum height=10pt,minimum  width=10pt,font=\fontsize{7}{7}\selectfont}, 
node2/.style={circle,fill=red!30,align=center,minimum height=20pt,minimum  width=20pt,font=\fontsize{7}{7}\selectfont},
node3/.style={circle,fill=green!30,align=center,minimum height=20pt,minimum  width=20pt,font=\fontsize{7}{7}\selectfont}]	

\node[node1] (11) at(1,0) {11};
\node[node1] (12) at(0,-1.75) {12};
\node[node1] (13) at(1,-3.5) {13};

\node[node2] (21) at(3.75,0) {21};
\node[node2] (24) at(3,-1.75) {24};
\node[node2] (22) at(4.5,-1.75) {22};
\node[node2] (23) at(3.75,-3.5) {23};

\node[node3] (31) at(7.5,0) {31};
\node[node3] (32) at(6.5,-1.75) {32};
\node[node3] (33) at(7.5,-3.5) {33};

\draw[->,blue!60] (11) -- (12);
\draw[<->,blue!60] (12) -- (13);
\draw[->,blue!60] (13) -- (11);

\draw[->,red!60] (21) -- (22);
\draw[->,red!60] (22) -- (23);
\draw[<->,red!60] (23) -- (24);
\draw[->,red!60] (24) -- (21);

\draw[->,green] (32) -- (33);
\draw[<->,green] (31) -- (32);
\draw[->,green] (33) -- (31);

\draw[->,black!60] (11) -- (21);
\draw[->,black!60] (23) -- (13);
\draw[->,black!60] (22) -- (32);
\draw[->,black!60] (33) -- (23);

\end{tikzpicture}
\caption{The unbalanced topology of the multi-coalition game.}\label{fig.graph.simu}
\end{figure}
Numerical simulations are provided in this section to verify the theoretical analysis.
Consider three coalitions  which contains three, four, three agents respectively, i.e., $N=3$, $n_1=3$, $n_2=4$, $n_3=3$. The communication topology is depicted in Fig. \ref{fig.graph.simu}. The cost function of each agent $ij\in\mathcal{V}$ is 
\begin{equation*}
f_{ij}=m_{ij}(x_{ij}^2-s_{ij}x_{ij})-h_{ij}x_{ij}(\mathbf{1}^{\rm{T}}\bm{x}) ,
\end{equation*}
where $m_{ij},s_{ij},h_{ij}$ are positive constants set as 
$m_{11}=3$, $m_{12}=11$,
$m_{13}=22$, $m_{21}=m_{22}=2$, $m_{23}=64$,
$m_{24}=8$, $m_{31}=60$,  
$m_{32}=m_{33}=4$,
$s_{11}=s_{12}=s_{13}=10$,
$s_{21}=s_{22}=s_{23}=s_{24}=50$,  $s_{31}=s_{32}=s_{33}=20$,
$h_{11}=0.35$, $h_{12}=0.25$, $h_{13}=0.15$, $h_{21}=0.2$, $h_{22}=0.1$, $h_{23}=0.05$, $h_{24}=0.25$,  $h_{31}=0.02$, $h_{32}=0.08$, $h_{33}=0.2$. By direct calculation, one can obtain the NE
$\bm{y}^*=[6.837,26.026,10.412]^{\rm{T}}$ and $\bm{x}^*=[6.837,6.837,6.837,26.026,26.026,26.026,26.026,10.412,$ $10.412,10.412]^{\rm{T}}$. 

The initial state is 
$\bm{x}(t_0)=[0,10,20,0,10,20,30,0,10,20]^{\rm{T}}$ and the algorithm parameter is chosen as
$\alpha=0.02$. The simulation result is shown in Fig. \ref{fig.x}, demonstrating the effectiveness of the proposed algorithm. {It can be seen from Fig. \ref{fig.x} that the convergence of the consensus error is very fast. However, this does not indicate that the consensus is reached before seeking NE. Note that in the proposed updating law of $x_{ij}$ in (\ref{eq.law.x.ij}), the first term $\sum_{im\in\mathcal{N}_{ij}^{i\text{-in}}}r_{ij}^{im}x_{im}(k)$ is mainly for driving the agent states in coalition $i$ to reach consensus, and the second term $-\frac{{\alpha}}{n_i}\sum_{m=1}^{n_i}\psi^{im}_{ij}(k)$ is mainly for making the agent states converge to the NE. Therefore, the consensus process and NE seeking process are going on at the same time. 
	 The main reason for the phenomenon of fast consensus is that the parameter $\alpha$ is relatively small.}

{Figure \ref{fig.x.compare} depicts the state trajectories of the agents under the algorithm (\ref{xijwithout}) for distributed NE seeking in multi-coalition games without consistency constraints, which indicates that the intra-coalition state-consensus cannot be achieved, and the final states of the agents are quite different from those in the consistency-constrained multi-coalition game.}

\begin{figure}
	\centering
	\includegraphics[width=3.4in]{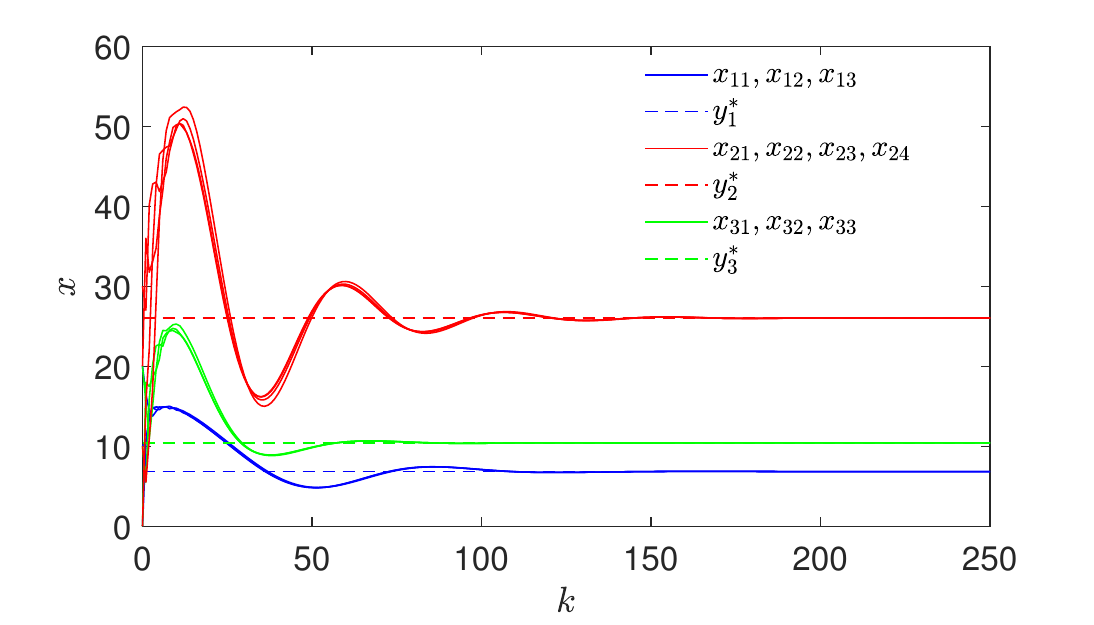}
	\caption{The state trajectories under the proposed algorithm (\ref{eq.law.x.ij}).}
	\label{fig.x}
\end{figure}

\begin{figure}
	\centering
	\includegraphics[width=3.4in]{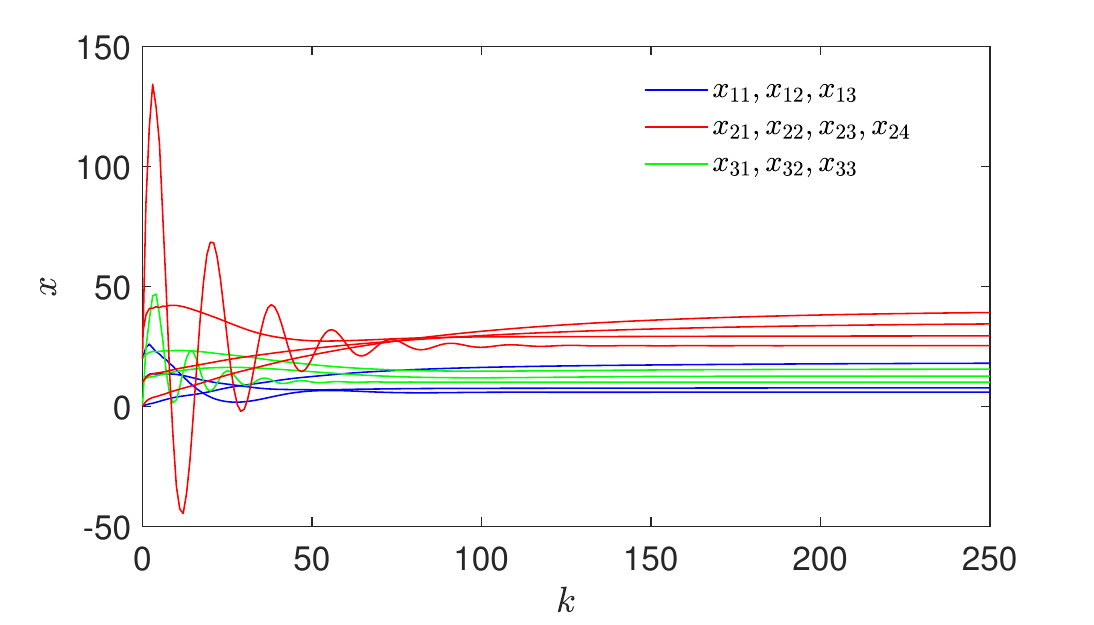}
	\caption{{The state trajectories under the algorithm (\ref{xijwithout}).}}
	\label{fig.x.compare}
\end{figure}

\vspace{1.5ex}

\section{Conclusion}\label{sec.conclusion}
The consistency-constrained multi-coalition games, where  each coalition contains multiple agents that aim to minimize the sum of their costs and at the meanwhile reach an agreement on their state values, have been investigated in this paper. %
A  new distributed discrete-time NE computation algorithm is developed under general directed network topologies.
The derived results provide a fundamental and unified framework for the studies of networked games and distributed optimization.
It is not difficult to extend the results to further develop distributed NE computation algorithms for consistency-constrained multi-coalition games with non-smooth cost functions {and extra state constraints. Future work will be done to further investigate reduced-order distributed NE seeking algorithm of consistency-constrained multi-coalition games with time-varying objective functions under switching topologies.} 
\appendix
\begin{lemma}\label{lemma.psi}
Suppose Assumptions \ref{assp.graph} and \ref{assp.fij.lipschitz} hold. Then, the function $V_\psi (t)$ defined in (\ref{eq.Lya.part}) satisfies
	\begin{equation}\label{eq.lemma.V_psi}
	\begin{aligned}
	&V_{\psi}(k+1){-}V_{\psi}(k)\\
	{\leq}
	&{-}\frac{1}{2}\|\bm{e_\psi}(k)\|^2+\beta_{\psi \xi}\|\bm{e_\xi}(k)\|^2+\beta_{\psi x}\|\bm{e_x}(k)\|^2.
	\end{aligned}
	\end{equation}
\end{lemma}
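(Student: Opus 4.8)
The plan is to bound $V_\psi(k{+}1)-V_\psi(k)$ using the error recursion \eqref{eq.e_psi.i.k+1.k} together with the Lyapunov identity $\bar C_i^T W_{c_i}\bar C_i - W_{c_i} = -I_{n_i}$ established earlier. First I would write, for each coalition $i$,
\[
\bm{e_{\psi_i}}(k+1) = (\bar C_i\otimes I_{n_i})\bm{e_{\psi_i}}(k) + (\bar I_{v_i}\otimes I_{n_i})\bm{\delta}_i(k),
\]
where $\bm{\delta}_i(k)\triangleq \mathcal{P}_i(\bm{\xi}_i(k{+}1))-\mathcal{P}_i(\bm{\xi}_i(k))$, and expand the quadratic form $\bm{e_{\psi_i}}(k+1)^T (W_{c_i}\otimes I_{n_i})\bm{e_{\psi_i}}(k+1)$. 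The cross term is handled by Young's inequality: $2a^Tb \le \mu\|a\|^2 + \mu^{-1}\|b\|^2$, choosing $\mu$ so that the $\|(\bar C_i\otimes I_{n_i})\bm{e_{\psi_i}}(k)\|$-part is absorbed into the decay term coming from the Lyapunov identity. Using $\bar C_i^T W_{c_i}\bar C_i - W_{c_i}=-I_{n_i}$ gives a $-\|\bm{e_{\psi_i}}(k)\|^2$ contribution, and a careful choice of the split (effectively $-\tfrac12\|\bm{e_{\psi_i}}(k)\|^2$ after absorbing the cross term) yields the $-\tfrac12\|\bm{e_\psi}(k)\|^2$ in the statement, with the remaining cross and pure-$\bm\delta$ terms collected into a constant multiple of $\|\bm{\delta}_i(k)\|^2$; the constant is exactly $2\|\bar C_i^TW_{c_i}\bar I_i\|^2 + \|\bar I_i^TW_{c_i}\bar I_i\|$ (matching the coefficient appearing in $\beta_{\psi\xi}$), after noting $\|(\bar I_{v_i}\otimes I_{n_i})\bm\delta_i\|\le\|\bar I_{v_i}\|\|\bm\delta_i\|$ etc.

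Next I would bound $\|\bm{\delta}_i(k)\|$. By Assumption \ref{assp.fij.lipschitz}, each component $\frac{\partial f_{ij}}{\partial x_{il}}$ is $l_{ij}$-Lipschitz, so $\|\bm{\delta}_i(k)\|^2 \le \max_{i,j}\{l_{ij}^2\}\,\|\bm{\xi}_i(k{+}1)-\bm{\xi}_i(k)\|^2$ (up to the obvious bookkeeping over the $n_i$ rows). Then from the $\bm\xi$-update \eqref{eq.law.xi} one has $\bm{\xi}(k{+}1)-\bm{\xi}(k) = -\Gamma(\mathcal{L}\otimes I_{n_\text{sum}}+A_d)(\bm{\xi}(k)-\bm{1}_{n_\text{sum}}\otimes\bm{x}(k))$. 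The key observation is that $\bm{\xi}(k)-\bm{1}_{n_\text{sum}}\otimes\bm{x}(k) = \bm{e_\xi}(k) - \bm{1}_{n_\text{sum}}\otimes\bm{e_x}(k)$ (since $\bm{1}_{n_\text{sum}}\otimes\bm x - \bm 1_{n_\text{sum}}\otimes\bar{\bm X} = \bm 1_{n_\text{sum}}\otimes\bm{e_x}$ and $\bm e_\xi = \bm\xi - \bm 1_{n_\text{sum}}\otimes\bar{\bm X}$), so
\[
\|\bm{\xi}(k{+}1)-\bm{\xi}(k)\|^2 \le 2\|\Gamma(\mathcal{L}\otimes I_{n_\text{sum}}+A_d)\|^2\big(\|\bm{e_\xi}(k)\|^2 + n_\text{sum}\|\bm{e_x}(k)\|^2\big),
\]
using $\|\bm 1_{n_\text{sum}}\otimes\bm{e_x}\|^2 = n_\text{sum}\|\bm{e_x}\|^2$. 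Substituting this into the $\|\bm\delta_i\|^2$ bound and summing over $i$ produces exactly $\beta_{\psi\xi}\|\bm{e_\xi}(k)\|^2 + \beta_{\psi x}\|\bm{e_x}(k)\|^2$ with $\beta_{\psi x}=n_\text{sum}\beta_{\psi\xi}$, as defined in the theorem.

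The main obstacle I anticipate is the bookkeeping in the cross-term split: one must choose the Young's-inequality weight so that the coefficient of $\|(\bar C_i\otimes I_{n_i})\bm{e_{\psi_i}}(k)\|^2$ plus $\|\bm{e_{\psi_i}}(k)\|^2$-penalty from the identity still leaves a clean $-\tfrac12\|\bm{e_{\psi_i}}(k)\|^2$, while keeping the $\bm\delta_i$-coefficient at precisely $2\|\bar C_i^TW_{c_i}\bar I_i\|^2+\|\bar I_i^TW_{c_i}\bar I_i\|$; getting these constants to match the stated $\beta$'s exactly (rather than merely up to a universal factor) requires care. A secondary subtlety is that $W_c=\diag\{W_{c_i}\otimes I_{n_i}\}$ is block-diagonal, so the coalition-wise estimates add up cleanly — I would note this explicitly and then take $\max_i$ over the per-coalition constants to get the uniform $\beta_{\psi\xi}$. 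Everything else is routine norm manipulation with Kronecker products and the triangle/Young inequalities.
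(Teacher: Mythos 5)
Your proposal is correct and follows essentially the same route as the paper: the same error recursion for $\bm{e_{\psi_i}}$, the Lyapunov identity $\bar C_i^TW_{c_i}\bar C_i-W_{c_i}=-I_{n_i}$ plus a Young-type split of the cross term yielding exactly the coefficient $2\|\bar C_i^TW_{c_i}\bar I_i\|^2+\|\bar I_i^TW_{c_i}\bar I_i\|$, the Lipschitz bound on $\mathcal{P}_i(\bm{\xi}_i(k+1))-\mathcal{P}_i(\bm{\xi}_i(k))$, and the substitution $\bm{\xi}(k)-\bm{1}_{n_\text{sum}}\otimes\bm{x}(k)=\bm{e_\xi}(k)-\bm{1}_{n_\text{sum}}\otimes\bm{e_x}(k)$ giving the factor $2$ and $n_{\text{sum}}$ in $\beta_{\psi\xi}$ and $\beta_{\psi x}=n_{\text{sum}}\beta_{\psi\xi}$. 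No gaps.
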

\begin{proof}
From the iteration of $\bm{e_{\psi_i}}$ derived in (\ref{eq.e_psi.i.k+1.k}), one can obtain that
\begin{equation*}\label{eq.V_psi.k+1.k.0}
\begin{aligned}
&V_{\psi}(k+1){-}V_{\psi}(k)\\
=&\sum_{i=1}^N\bigg(\bm{e_{\psi_i}}(k)^{\rm{T}}\big((\bar{C}_i^{\rm{T}}W_{c_i}\bar{C}_i-W_{c_i}){\otimes}I_{n_i}\big)\bm{e_{\psi_i}}(k)\\
&+2\bm{e_{\psi_i}}(k)^{\rm{T}}(\bar{C}_i^{\rm{T}}W_{c_i}\bar{I}_i{\otimes}I_{n_i})\left(\mathcal{P}_i(\bm{\xi}_i(k+1)){-}\mathcal{P}_i(\bm{\xi}_i({k}))\right)\\
&+\left(\mathcal{P}_i(\bm{\xi}_i(k+1)){-}\mathcal{P}_i(\bm{\xi}_i({k}))\right)^{\rm{T}}(\bar{I}_i^{\rm{T}} W_{c_i} \bar{I}_i{\otimes}I_{n_i})\\
&{\times}\left(\mathcal{P}_i(\bm{\xi}_i(k+1)){-}\mathcal{P}_i(\bm{\xi}_i({k}))\right)\bigg)\\
{\leq}&\sum_{i=1}^N\bigg(-\|\bm{e_{\psi_i}}(k)\|^2+\frac{1}{2}\|\bm{e_{\psi_i}}(k)\|^2\\
&+2\|\bar{C}_i^{\rm{T}}W_{c_i}\bar{I}_i\|^2\|\mathcal{P}_i(\bm{\xi}_i(k+1)){-}\mathcal{P}_i(\bm{\xi}_i({k}))\|^2\\
&+\|\bar{I}_i^{\rm{T}} W_{c_i} \bar{I}_i\|\|\mathcal{P}_i(\bm{\xi}_i(k+1)){-}\mathcal{P}_i(\bm{\xi}_i({k}))\|^2\bigg)\\
=&{-}\frac{1}{2}\|\bm{e_\psi}(k)\|^2+\sum_{i=1}^N(2\|\bar{C}_i^{\rm{T}}W_{c_i}\bar{I}_i\|^2+\|\bar{I}_i^{\rm{T}} W_{c_i} \bar{I}_i\|)\times\\
&\|\mathcal{P}_i(\bm{\xi}_i(k+1)){-}\mathcal{P}_i(\bm{\xi}_i({k}))\|^2.
\end{aligned}
\end{equation*}
Under Assumption \ref{assp.fij.lipschitz}, one has
\begin{equation*}
\begin{aligned}
&\left\|\mathcal{P}_i(\bm{\xi}_i(k+1)){-}\mathcal{P}_i(\bm{\xi}_i({k}))\right\|^2\\
=&\sum_{j=1}^{n_i}\left\|\frac{\partial f_{ij}}{\partial \bm{x}_i}(\bm{\xi_{ij}}(k+1)){-}\frac{\partial f_{ij}}{\partial \bm{x}_i}(\bm{\xi_{ij}}({k}))\right\|^2\\
{\leq}&{\sum_{j=1}^{n_i}\left(l_{ij}^2\left\|\bm{\xi}_{ij}(k+1){-}\bm{\xi}_{ij}({k})\right\|^2\right)}\\
{\leq}&\max_{j}\{l_{ij}^2\}\left\|\bm{\xi}_i(k+1){-}\bm{\xi}_i({k})\right\|^2,
\end{aligned}
\end{equation*}
substituting which back into the above inequality yields 
\begin{equation*}\label{eq.V_psi.k+1.k.1}
\begin{aligned}
&V_{\psi}(k+1){-}V_{\psi}(k)\\
{\leq}&{-}\frac{1}{2}\|\bm{e_\psi}(k)\|^2+\sum_{i=1}^N(2\|\bar{C}_i^{\rm{T}}W_{c_i}\bar{I}_i\|^2+\|\bar{I}_i^{\rm{T}} W_{c_i} \bar{I}_i\|)\times\\
&\max_{j}\{l_{ij}^2\}\|\bm{\xi}_i(k+1){-}\bm{\xi}_i({k})\|^2\\
{\leq}&{-}\frac{1}{2}\|\bm{e_\psi}(k)\|^2+\max_{i,j}\{(2\|\bar{C}_i^{\rm{T}}W_{c_i}\bar{I}_i\|^2+\|\bar{I}_i^{\rm{T}} W_{c_i} \bar{I}_i\|)l_{ij}^2\}\times\\
&\|\bm{\xi}(k+1){-}\bm{\xi}({k})\|^2.
\end{aligned}
\end{equation*}
Recalling (\ref{eq.law.xi}), it follows that
\begin{equation*}
\begin{aligned}
&V_{\psi}(k+1){-}V_{\psi}(k)\\
{\leq}&{-}\frac{1}{2}\|\bm{e_\psi}(k)\|^2+\max_{i,j}\{(2\|\bar{C}_i^{\rm{T}}W_{c_i}\bar{I}_i\|^2+\|\bar{I}_i^{\rm{T}} W_{c_i} \bar{I}_i\|)l_{ij}^2\}\times\\
&\|\Gamma(\mathcal{L}{\otimes} I_{n_\text{sum}}+A_d)\|^2\|\big(\bm{\xi}(k){-}\bm{1}_{n_\text{sum}}{\otimes}\bm{x}(k)\big)\|^2\\
{\leq}&{-}\frac{1}{2}\|\bm{e_\psi}(k)\|^2+2\max_{i,j}\{(2\|\bar{C}_i^{\rm{T}}W_{c_i}\bar{I}_i\|^2+\|\bar{I}_i^{\rm{T}} W_{c_i} \bar{I}_i\|)l_{ij}^2\}\times\\
&\|\Gamma(\mathcal{L}{\otimes} I_{n_\text{sum}}+A_d)\|^2(\|\bm{e_\xi}(k)\|^2+{n_{\text{sum}}}\|\bm{e_x}(k)\|^2)\\
=&{-}\frac{1}{2}\|\bm{e_\psi}(k)\|^2+\beta_{\psi \xi}\|\bm{e_\xi}(k)\|^2+\beta_{\psi x}\|\bm{e_x}(k)\|^2.
\end{aligned}
\end{equation*}
\end{proof}

\begin{lemma}\label{lemma.bar.x}
Suppose Assumptions \ref{assp.fij.lipschitz} and \ref{assp.Q} holds. Then, the function $V_{\bar{x}} (t)$ defined in (\ref{eq.Lya.part}) satisfies
	\begin{equation}\label{eq.lemma.bar.x}
	\begin{aligned}
	&V_{{\bar{x}}}(k+1){-}V_{{\bar{x}}}({k})\\
	{\leq}&{-}l\|\bm{e_{\bar{x}}}({k})\|^2
	+\beta_{\bar x\psi}\|\bm{e_{\psi}}(k)\|^2+
	\beta_{\bar x\xi}\|\bm{e_{\xi}}(k)\|^2\\
	&+\alpha \max\limits_{i}\{b_{1i}\}\bigg(\|{\bm{e_\psi}}(k)\|^2+\|\bm{e_{\xi}}(k)\|^2+N\|\bm{e_{\bar{x}}}(k)\|^2\bigg).
	\end{aligned}
	\end{equation}
\end{lemma}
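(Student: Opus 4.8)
The plan is to expand $V_{\bar{x}}(k+1){-}V_{\bar{x}}(k)$ coalition by coalition from the scalar recursion (\ref{eq.e_bar.x.i.k+1.k}), exploiting the diagonal weighting $W_{\bar{x}}{=}\diag\{n_i^3/(\alpha u_i^Tv_i)\}$ so that the leading cross term collapses onto an inner product with the pseudo-gradient, on which Assumption \ref{assp.Q} can be invoked. Writing $s_i(k){=}u_i^T(I_{n_i}{\otimes}\bm{1}_{n_i}^T)\bm{\psi}_i(k)$, so that (\ref{eq.e_bar.x.i.k+1.k}) reads $e_{\bar{x}_i}(k+1){=}e_{\bar{x}_i}(k){-}\frac{\alpha}{n_i^2}s_i(k)$, squaring and weighting gives
\begin{equation*}
\begin{aligned}
&V_{\bar{x}}(k+1){-}V_{\bar{x}}(k)\\
=&\sum_{i=1}^N\frac{n_i^3}{\alpha u_i^Tv_i}\big(e_{\bar{x}_i}(k+1)^2{-}e_{\bar{x}_i}(k)^2\big)\\
=&\sum_{i=1}^N\Big({-}\frac{2n_i}{u_i^Tv_i}e_{\bar{x}_i}(k)s_i(k)+\frac{\alpha}{n_iu_i^Tv_i}s_i(k)^2\Big).
\end{aligned}
\end{equation*}
The exponent $n_i^3$ and the factor $\alpha$ in $W_{\bar{x}}$ are chosen precisely so that $\alpha$ cancels in the cross term, leaving the clean coefficient $2n_i/(u_i^Tv_i)$.

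Next I would decompose $s_i$ along the splitting $\bm{\psi}_i{=}v_i{\otimes}\bar{\bm{\psi}}_i{+}\bm{e_{\psi_i}}$ (the definition of $\bm{e_{\psi_i}}$). Since $(I_{n_i}{\otimes}\bm{1}_{n_i}^T)(v_i{\otimes}\bar{\bm{\psi}}_i){=}v_i(\bm{1}_{n_i}^T\bar{\bm{\psi}}_i)$, this yields $s_i{=}u_i^Tv_i\,p_i{+}q_i$ with $p_i{=}\bm{1}_{n_i}^T\bar{\bm{\psi}}_i{=}\bm{1}_{n_i}^T\bar{\mathcal{P}}_i(\bm{\xi}_i)$ (using (\ref{eq.bar.psi.P.i})) and $q_i{=}u_i^T(I_{n_i}{\otimes}\bm{1}_{n_i}^T)\bm{e_{\psi_i}}$. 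The crucial identity is that, by (\ref{eq.bar.P.i.gradient}) and the definition (\ref{eq.def.gi}) of $g_i$, $\bm{1}_{n_i}^T\bar{\mathcal{P}}_i(\bm{1}_{n_i}{\otimes}\bar{\bm{X}}){=}\frac{1}{n_i}\frac{\partial g_i}{\partial y_i}(\bar{\bm{x}})$, the $i$th entry of $\mathcal{Q}(\bar{\bm{x}})$. Writing $p_i{=}\frac{1}{n_i}\frac{\partial g_i}{\partial y_i}(\bar{\bm{x}}){+}\delta_i$ with $\delta_i{=}\bm{1}_{n_i}^T\big(\bar{\mathcal{P}}_i(\bm{\xi}_i){-}\bar{\mathcal{P}}_i(\bm{1}_{n_i}{\otimes}\bar{\bm{X}})\big)$ thus isolates the ideal feedback from the estimation error $\delta_i$, and $q_i$ from the gradient-tracking error.

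The heart of the argument is the leading cross term $-\sum_i2n_ie_{\bar{x}_i}p_i$, whose ideal part equals $-2\sum_ie_{\bar{x}_i}\frac{\partial g_i}{\partial y_i}(\bar{\bm{x}}){=}-2\bm{e_{\bar{x}}}^T\mathcal{Q}(\bar{\bm{x}}){=}-2(\bar{\bm{x}}{-}\bm{y}^*)^T(\mathcal{Q}(\bar{\bm{x}}){-}\mathcal{Q}(\bm{y}^*))$, since $e_{\bar{x}_i}{=}\bar{x}_i{-}y_i^*$ and $\mathcal{Q}(\bm{y}^*){=}0$ by (\ref{eq.proper.y*.g}). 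Assumption \ref{assp.Q} bounds this by $-2l\|\bm{e_{\bar{x}}}\|^2$. I would then spend $\frac{l}{2}\|\bm{e_{\bar{x}}}\|^2$ of this margin on each of the two residual cross terms $-2n_ie_{\bar{x}_i}\delta_i$ and $-\frac{2n_i}{u_i^Tv_i}e_{\bar{x}_i}q_i$ through Young's inequality; the companion terms become $\beta_{\bar{x}\xi}\|\bm{e_\xi}\|^2$ and $\beta_{\bar{x}\psi}\|\bm{e_\psi}\|^2$ after using Assumption \ref{assp.fij.lipschitz} to get $\delta_i^2{\leq}\frac{\sum_jl_{ij}^2}{n_i}\|\bm{e_{\xi_i}}\|^2$ (via Cauchy--Schwarz over the $n_i$ intra-coalition estimates) and $\|I_{n_i}{\otimes}\bm{1}_{n_i}^T\|{=}\sqrt{n_i}$ to get $q_i^2{\leq}n_i\|u_i\|^2\|\bm{e_{\psi_i}}\|^2$. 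Summing over the $N$ coalitions and replacing per-coalition constants by their maxima yields the $-l\|\bm{e_{\bar{x}}}\|^2{+}\beta_{\bar{x}\psi}\|\bm{e_\psi}\|^2{+}\beta_{\bar{x}\xi}\|\bm{e_\xi}\|^2$ part of (\ref{eq.lemma.bar.x}).

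Finally the quadratic term $\frac{\alpha}{n_iu_i^Tv_i}s_i^2$ supplies the $\alpha$-weighted remainder. Here I would bound $s_i^2{\leq}\|u_i\|^2\|(I_{n_i}{\otimes}\bm{1}_{n_i}^T)\bm{\psi}_i\|^2{\leq}\|u_i\|^2(2\|v_i\|^2p_i^2{+}2n_i\|\bm{e_{\psi_i}}\|^2)$ and then bound $p_i^2$ itself: the ideal part $\frac{1}{n_i}\frac{\partial g_i}{\partial y_i}(\bar{\bm{x}})$ vanishes at $\bm{y}^*$, so writing it as $\bm{1}_{n_i}^T(\bar{\mathcal{P}}_i(\bm{1}_{n_i}{\otimes}\bar{\bm{X}}){-}\bar{\mathcal{P}}_i(\bm{1}_{n_i}{\otimes}\bm{x}^*))$ and applying Assumption \ref{assp.fij.lipschitz} with $\|\bar{\bm{X}}{-}\bm{x}^*\|^2{=}\sum_in_ie_{\bar{x}_i}^2{\leq}\max_i\{n_i\}\|\bm{e_{\bar{x}}}\|^2$ controls it by $\|\bm{e_{\bar{x}}}\|^2$, while $\delta_i^2$ contributes the $\|\bm{e_{\xi_i}}\|^2$ piece. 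Collecting the constants into $b_{0i}$ (and noting that the global dependence of $\frac{\partial g_i}{\partial y_i}(\bar{\bm{x}})$ on all of $\bar{\bm{x}}$ is what produces, after summing the $N$ coalitions, the factor $N$) dominates this term by $\alpha\max_i\{b_{1i}\}(\|\bm{e_\psi}\|^2{+}\|\bm{e_\xi}\|^2{+}N\|\bm{e_{\bar{x}}}\|^2)$ with $b_{1i}{=}\frac{\|u_i\|^2}{n_iu_i^Tv_i}b_{0i}$. I expect the main obstacle to be the second step: recognizing that after the diagonal rescaling the dominant cross term is exactly $-2\bm{e_{\bar{x}}}^T(\mathcal{Q}(\bar{\bm{x}}){-}\mathcal{Q}(\bm{y}^*))$, which hinges on the identity $\bm{1}_{n_i}^T\bar{\mathcal{P}}_i(\bm{1}_{n_i}{\otimes}\bar{\bm{X}}){=}\frac{1}{n_i}\frac{\partial g_i}{\partial y_i}(\bar{\bm{x}})$ together with $\mathcal{Q}(\bm{y}^*){=}0$; the remaining estimates are routine Young/Lipschitz bookkeeping affecting only the explicit constants.
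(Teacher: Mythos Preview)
Your proposal is correct and follows essentially the same route as the paper: expand $V_{\bar{x}}(k+1)-V_{\bar{x}}(k)$ from (\ref{eq.e_bar.x.i.k+1.k}), split $\bm{\psi}_i$ as $\bm{e_{\psi_i}}+v_i{\otimes}\bar{\mathcal P}_i(\bm{\xi}_i)$ and then further insert $\pm v_i{\otimes}\bar{\mathcal P}_i(\bm 1_{n_i}{\otimes}\bar{\bm X})$ and $\pm v_i{\otimes}\bar{\mathcal P}_i(\bm 1_{n_i}{\otimes}\bm{x}^*)$ so that the leading cross term becomes $-2\bm{e_{\bar x}}^T(\mathcal Q(\bar{\bm x})-\mathcal Q(\bm y^*))\le -2l\|\bm{e_{\bar x}}\|^2$, and handle the remaining cross terms and the $\alpha$-quadratic term by Young/Lipschitz estimates. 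The only cosmetic difference is that the paper bounds $\|(I_{n_i}{\otimes}\bm 1_{n_i}^T)\bm\psi_i\|^2$ via a single three-term Cauchy--Schwarz step (yielding the exact constant $b_{0i}$), whereas you sketch two nested two-term splits; this affects only the numerical constants, not the argument.
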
	
\begin{proof}
From the iteration of $e_{\bar{x}_i}$ in (\ref{eq.e_bar.x.i.k+1.k}), one can derive that
\begin{equation}\label{eq.V_bar.x.k+1.k.0}
\begin{aligned}
&V_{{\bar{x}}}(k+1){-}V_{{\bar{x}}}({k})\\
=&\sum_{i=1}^{N}\frac{n_i^3}{\alpha u_i^{\rm{T}}v_i}
\bigg({-}2{e_{\bar{x}_i}}({k})\frac{\alpha}{n_i^2}u_i^{\rm{T}}(I_{n_i}{\otimes}\bm{1}_{n_i}^{\rm{T}})\bm{\psi_i}(k)\\
&+\big(\frac{\alpha}{n_i^2}u_i^{\rm{T}}(I_{n_i}{\otimes}\bm{1}_{n_i}^{\rm{T}}){\bm{\psi}}_i(k)\big)^2\bigg)\\
=&\sum_{i=1}^{N}\frac{n_i}{ u_i^{\rm{T}}v_i}
\bigg({-}2{e_{\bar{x}_i}}({k})u_i^{\rm{T}}(I_{n_i}{\otimes}\bm{1}_{n_i}^{\rm{T}})\big({\bm{\psi}}_i(k){-}v_i{\otimes}\bar{\bm{\psi}}_i(k)\\
&+v_i{\otimes}\bar{\mathcal{P}}_i(\bm{\xi}_i(k)){-}v_i{\otimes}\bar{\mathcal{P}}_i(\bm{1}_{n_i}{\otimes}\bar{{X}}(k))\\
&+v_i{\otimes}\bar{\mathcal{P}}_i(\bm{1}_{n_i}{\otimes}\bar{{X}}(k)){-}v_i{\otimes}\bar{\mathcal{P}}_i(\bm{1}_{n_i}{\otimes}{\bm{x}^*})\big)\\
&+\frac{\alpha}{n_i^2}(u_i^{\rm{T}}(I_{n_i}{\otimes}\bm{1}_{n_i}^{\rm{T}}){\bm{\psi}}_i(k))^2\bigg)\\
=&\sum_{i=1}^{N}
\bigg({-}\frac{2n_i}{ u_i^{\rm{T}}v_i}{e_{\bar{x}_i}}({k})u_i^{\rm{T}}(I_{n_i}{\otimes}\bm{1}_{n_i}^{\rm{T}})\bm{e_{\psi_i}}(k)\\
&{-}2{e_{\bar{x}_i}}({k}) \bm{1}_{n_i}^{\rm{T}}(n_i\bar{\mathcal{P}}_i(\bm{\xi}_i(k)){-}n_i\bar{\mathcal{P}}_i(\bm{1}_{n_i}{\otimes}\bar{{X}}(k)))\\
&{-}2{e_{\bar{x}_i}}({k}) \bm{1}_{n_i}^{\rm{T}}(n_i\bar{\mathcal{P}}_i(\bm{1}_{n_i}{\otimes}\bar{{X}}(k)){-}n_i\bar{\mathcal{P}}_i(\bm{1}_{n_i}{\otimes}{\bm{x}^*}))\\
&+\frac{\alpha}{ u_i^{\rm{T}}v_in_i}\big(u_i^{\rm{T}}(I_{n_i}{\otimes}\bm{1}_{n_i}^{\rm{T}}){\bm{\psi}}_i(k)\big)^2\bigg),
\end{aligned}
\end{equation}
where the second equality is obtained by using the fact that
\begin{equation*}
\begin{aligned}
(I_{n_i}{\otimes}\bm{1}_{n_i}^{\rm{T}})\big(v_i{\otimes}\bar{\mathcal{P}}_i(\bm{1}_{n_i}{\otimes}{\bm{x}^*})\big)=v_i{\otimes}\big(\bm{1}_{n_i}^{\rm{T}}\bar{\mathcal{P}}_i(\bm{1}_{n_i}{\otimes}{\bm{x}^*})\big)=0.
\end{aligned}
\end{equation*}
Note that under Assumption \ref{assp.Q},
\begin{equation*}
\begin{aligned}
&{-}\sum_{i=1}^{N}2{e_{\bar{x}_i}}({k})\bm{1}_{n_i}^{\rm{T}}({n_i}\bar{\mathcal{P}}_i(\bm{1}_{n_i}{\otimes}\bar{{X}}(k)){-}{n_i}\bar{\mathcal{P}}_i(\bm{1}_{n_i}{\otimes}{\bm{x}^*}))\\
=&{-}2\bm{e_{\bar{x}}}({k})^{\rm{T}}\big(\mathcal{Q}(\bm{\bar{x}}){-}\mathcal{Q}(\bm{y}^*)\big)\\
{\leq}&{-}2l\|\bm{e_{\bar{x}}}({k})\|^2,
\end{aligned}
\end{equation*}
and that under Assumption \ref{assp.fij.lipschitz},
\begin{equation}\label{eq.ni.barPi.1}
\begin{aligned}
&\left\|n_i\bar{\mathcal{P}}_i(\bm{\xi}_i){-}
n_i\bar{\mathcal{P}}_i(\bm{1}_{n_i}{\otimes}\bar{{X}})\right\|\\
=&\bigg\|\sum\limits_{j=1}^{n_i}\bigg(\frac{\partial f_{ij}}{\partial \bm{x}_i}(\bm{\xi}_{ij}){-}\frac{\partial f_{ij}}{\partial \bm{x}_i}(\bar{{X}})\bigg)\bigg\|\\
{\leq}&\sum\limits_{j=1}^{n_i}\bigg\|\frac{\partial f_{ij}}{\partial \bm{x}_i}(\bm{\xi}_{ij}){-}\frac{\partial f_{ij}}{\partial \bm{x}_i}(\bar{{X}})\bigg\|\\
{\leq}&\sum\limits_{j=1}^{n_i}l_{ij}\|\bm{\xi}_{ij}{-}\bar{{X}}\|\\
\leq&\sqrt{\sum_{j=1}^{n_i}l_{ij}^2}\cdot\big\|\bm{\xi_i}{-}\bm{1}_{n_i}{\otimes}\bar{{X}}\big\|.
\end{aligned}
\end{equation}
Then, one can further derive from  (\ref{eq.V_bar.x.k+1.k.0}) that
\begin{equation}\label{eq.V_barx.k+1.k.1}
\begin{aligned}
&V_{{\bar{x}}}(k+1){-}V_{{\bar{x}}}({k})\\
{\leq}&2\max_{i}\{\frac{n_i\sqrt{n_i}\|u_i\|}{ u_i^{\rm{T}}v_i}\}\|\bm{e_{\bar{x}}}({k})\|\|\bm{e_{\psi}}(k)\|\\
&+\sum_{i=1}^{N}2\sqrt{n_i}\|{e_{\bar{x}_i}}({k})\| \|n_i\bar{\mathcal{P}}_i(\bm{\xi}_i(k)){-}n_i\bar{\mathcal{P}}_i(\bm{1}_{n_i}{\otimes}\bar{{X}}(k))\|\\
&{-}2l\|\bm{e_{\bar{x}}}({k})\|^2
+\sum_{i=1}^{N}\frac{\alpha\|u_i\|^2}{n_i u_i^{\rm{T}}v_i}\left\|(I_{n_i}{\otimes}\bm{1}_{n_i}^{\rm{T}}){\bm{\psi}}_i(k)\right\|^2\\
{\leq}
&\frac{l}{2}\|\bm{e_{\bar{x}}}({k})\|^2+\frac{2}{l}\max_{i}\{\frac{n_i^3\|u_i\|^2}{ (u_i^{\rm{T}}v_i)^2}\}\|\bm{e_{\psi}}(k)\|^2\\
&+\sum_{i=1}^{N}\big(\frac{l}{2}\|{e_{\bar{x}_i}}({k})\|^2+
\frac{2n_i}{l}\big\|n_i\bar{\mathcal{P}}_i(\bm{\xi}_i(k))\\
&{-}n_i\bar{\mathcal{P}}_i(\bm{1}_{n_i}{\otimes}\bar{{X}}(k))\big\|^2\big)\\
&{-}2l\|\bm{e_{\bar{x}}}({k})\|^2
+\sum_{i=1}^{N}\frac{\alpha\|u_i\|^2}{n_i u_i^{\rm{T}}v_i}\|(I_{n_i}{\otimes}\bm{1}_{n_i}^{\rm{T}}){\bm{\psi}}_i(k)\|^2\\
{\leq}&{-}l\|\bm{e_{\bar{x}}}({k})\|^2
+\frac{2}{l}\max_{i}\{\frac{n_i^3\|u_i\|^2}{ (u_i^{\rm{T}}v_i)^2}\}\|\bm{e_{\psi}}(k)\|^2\\
&+\sum_{i=1}^{N}
\frac{2n_i{\sum_{j=1}^{n_i}l_{ij}^2}}{l}\|\bm{e_{\xi_i}}(k)\|^2\\
&+\sum_{i=1}^{N}\frac{\alpha\|u_i\|^2}{n_i u_i^{\rm{T}}v_i}\|(I_{n_i}{\otimes}\bm{1}_{n_i}^{\rm{T}}){\bm{\psi}}_i(k)\|^2\\
{\leq}&{-}l\|\bm{e_{\bar{x}}}({k})\|^2
+\beta_{\bar{x}\psi}\|\bm{e_{\psi}}(k)\|^2+\beta_{\bar x\xi}\|\bm{e_{\xi}}(k)\|^2\\
&+\sum_{i=1}^{N}\frac{\alpha\|u_i\|^2}{n_i u_i^{\rm{T}}v_i}\|(I_{n_i}{\otimes}\bm{1}_{n_i}^{\rm{T}}){\bm{\psi}}_i(k)\|^2.
\end{aligned}
\end{equation}
Note that
\begin{equation}
\begin{aligned}
&(I_{n_i}{\otimes}\bm{1}_{n_i}^{\rm{T}}){\bm{\psi}}_i(k)\\
=&(I_{n_i}{\otimes}\bm{1}_{n_i}^{\rm{T}})\big({\bm{\psi}}_i(k){-}v_i{\otimes}\bar{\bm{\psi}}_i(k)+v_i{\otimes}\bar{\mathcal{P}}_i(\bm{\xi}_i(k))\\
&{-}v_i{\otimes}\bar{\mathcal{P}}_i(\bm{1}_{n_i}{\otimes}{\bm{x}^*})\big)\\
=&(I_{n_i}{\otimes}\bm{1}_{n_i}^{\rm{T}}){\bm{e_\psi}}_i(k)+\frac{ v_i\bm{1}_{n_i}^{\rm{T}}}{n_i}n_i(\bar{\mathcal{P}}_i(\bm{\xi}_i(k)){-}\bar{\mathcal{P}}_i(\bm{1}_{n_i}{\otimes}{\bm{x}^*})\big).
\end{aligned}
\end{equation}
Similar to (\ref{eq.ni.barPi.1}), under Assumption \ref{assp.fij.lipschitz},
\begin{equation}\label{eq.ni.barPi.2}
\begin{aligned}
\left\|n_i\bar{\mathcal{P}}_i(\bm{\xi}_i){-}
n_i\bar{\mathcal{P}}_i(\bm{1}_{n_i}{\otimes}\bm{x}^*)\right\|
\leq\sqrt{\sum_{j=1}^{n_i}l_{ij}^2}\cdot\big\|\bm{\xi_i}{-}\bm{1}_{n_i}{\otimes}\bm{{x}}^*\big\|.
\end{aligned}
\end{equation}

Then,
\begin{equation}
\begin{aligned}
&\|(I_{n_i}{\otimes}\bm{1}_{n_i}^{\rm{T}}){\bm{\psi}}_i(k)\|^2\\
{\leq}&\left(\sqrt{n_i}\|{\bm{e_\psi}}_i(k)\|+\frac{ \|v_i\|}{\sqrt{n_i}}\left\|n_i\bar{\mathcal{P}}_i(\bm{\xi}_i(k)){-}n_i\bar{\mathcal{P}}_i(\bm{1}_{n_i}{\otimes}{\bm{x}^*})\right\|\right)^2\\
{\leq}&\left(\sqrt{n_i}\|{\bm{e_\psi}}_i(k)\|+\frac{ \|v_i\|}{\sqrt{n_i}}\sqrt{\sum_{j=1}^{n_i}l_{ij}^2}\big\|\bm{\xi}_{i}(k){-}\bm{1}_{n_i}{\otimes}\bm{x}^*\big\|\right)^2\\
{\leq}&\bigg(\sqrt{n_i}\|{\bm{e_\psi}}_i(k)\|+\frac{ \|v_i\|}{\sqrt{n_i}}\sqrt{\sum_{j=1}^{n_i}l_{ij}^2}\|\bm{e_{\xi_{i}}}(k)\|\\
&+\frac{ \|v_i\|}{\sqrt{n_i}}\sqrt{\sum_{j=1}^{n_i}l_{ij}^2}\left\|\bm{1}_{n_i}{\otimes}(\bar{X}(k){-}\bm{x}^*)\right\|\bigg)^2\\
{\leq}&\bigg(\sqrt{n_i}\|{\bm{e_\psi}}_i(k)\|+\frac{ \|v_i\|\sqrt{\sum_{j=1}^{n_i}l_{ij}^2}}{\sqrt{n_i}}\|\bm{e_{\xi_{i}}}(k)\|\\
&+{ \|v_i\|}\sqrt{\sum_{j=1}^{n_i}l_{ij}^2}\sqrt{\max\limits_{i}\{n_i\}}\left\|\bm{e_{\bar{x}}}(k)\right\|\bigg)^2\\
{\leq}&b_{0i}\left(\|{\bm{e_\psi}}_i(k)\|^2+\|\bm{e_{\xi_{i}}}(k)\|^2+\|\bm{e_{\bar{x}}}(k)\|^2\right),
\end{aligned}
\end{equation}
where the last second inequality is derived by noting that
\begin{equation*}
\begin{aligned}
\|\bar{X}-\bm{x}^*\|
=\sqrt{\sum_{i=1}^Nn_i(\bar{x}_i{-}y_i^*)^2}
{\leq}\sqrt{\max\limits_{i}\{n_i\}}\|\bm{e_{\bar{x}}}\|.
\end{aligned}
\end{equation*}
Thus, we can derive
\begin{equation}\label{eq.b1}
\begin{aligned}
&\sum_{i=1}^{N}\frac{\alpha\|u_i\|^2}{n_i u_i^{\rm{T}}v_i}\|(I_{n_i}{\otimes}\bm{1}_{n_i}^{\rm{T}}){\bm{\psi}}_i(k)\|^2\\
{\leq}&\sum_{i=1}^{N}\alpha b_{1i}\bigg(\|{\bm{e_\psi}}_i(k)\|^2+\|\bm{e_{\xi_{i}}}(k)\|^2+\|\bm{e_{\bar{x}}}(k)\|^2\bigg)\\
{\leq}&\alpha \max\limits_{i}\{b_{1i}\}\bigg(\|{\bm{e_\psi}}(k)\|^2+\|\bm{e_{\xi}}(k)\|^2+N\|\bm{e_{\bar{x}}}(k)\|^2\bigg),
\end{aligned}
\end{equation}

Substituting (\ref{eq.b1}) into (\ref{eq.V_barx.k+1.k.1}) yields
\begin{equation}\label{eq.V_barx.k+1.k.2}
\begin{aligned}
&V_{{\bar{x}}}(k+1){-}V_{{\bar{x}}}({k})\\
{\leq}&{-}l\|\bm{e_{\bar{x}}}({k})\|^2
+\beta_{\bar x\psi}\|\bm{e_{\psi}}(k)\|^2+
\beta_{\bar x\xi}\|\bm{e_{\xi}}(k)\|^2\\
&+\alpha \max\limits_{i}\{b_{1i}\}\bigg(\|{\bm{e_\psi}}(k)\|^2+\|\bm{e_{\xi}}(k)\|^2+N\|\bm{e_{\bar{x}}}(k)\|^2\bigg).
\end{aligned}
\end{equation}
\end{proof}

\begin{lemma}\label{lemma.xi}
Suppose that Assumption \ref{assp.graph} hold. Then, the function $V_{\xi}(t)$ defined in (\ref{eq.Lya.part}) satisfies
	\begin{equation}
	\begin{aligned}
	&V_{\xi}(k+1){-}V_{\xi}(k)\\
	{\leq}&{-}\frac{1}{2}\|\bm{e_{\xi}}({k})\|^2
	+\beta_{\xi x}\big\|\bm{e_x}(k)\big\|^2\\
	&+\alpha^2\max\limits_{i}\{b_{2i}\}(\|{\bm{e_\psi}}(k)\|^2+\|\bm{e_{\xi}}(k)\|^2+N\|\bm{e_{\bar{x}}}(k)\|^2).
	\end{aligned}
	\end{equation}
\end{lemma}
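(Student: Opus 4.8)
The plan is to expand $V_{\xi}(k+1)=\bm{e_\xi}(k+1)^TW_{\mathcal{M}}\bm{e_\xi}(k+1)$ directly from the already-derived error recursion (\ref{eq.e_xi.k+1.k}) and to exploit the Lyapunov identity $\mathcal{M}^TW_{\mathcal{M}}\mathcal{M}-W_{\mathcal{M}}=-I_{n_{\text{sum}}^2}$. For brevity I would collect the two forcing terms as
\begin{equation*}
w(k)=\Gamma(\mathcal{L}{\otimes}I_{n_\text{sum}}+A_d)\big(\bm{1}_{n_\text{sum}}{\otimes}\bm{e_x}(k)\big)+\bm{1}_{n_\text{sum}}{\otimes}\big(\alpha h(k)\big),
\end{equation*}
so that $\bm{e_\xi}(k+1)=\mathcal{M}\bm{e_\xi}(k)+w(k)$. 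Expanding the quadratic form and applying the Schur identity to the pure $\mathcal{M}$ term yields
\begin{equation*}
V_{\xi}(k+1){-}V_{\xi}(k)={-}\|\bm{e_\xi}(k)\|^2+2\bm{e_\xi}(k)^T\mathcal{M}^TW_{\mathcal{M}}w(k)+w(k)^TW_{\mathcal{M}}w(k).
\end{equation*}

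Next I would dispose of the cross term by Young's inequality with weight $1/2$, namely $2\bm{e_\xi}^T\mathcal{M}^TW_{\mathcal{M}}w\leq\frac{1}{2}\|\bm{e_\xi}\|^2+2\|\mathcal{M}^TW_{\mathcal{M}}\|^2\|w\|^2$, and bound the remaining quadratic term by $w^TW_{\mathcal{M}}w\leq\|W_{\mathcal{M}}\|\|w\|^2$. This keeps exactly half of the negative term, leaving ${-}\frac{1}{2}\|\bm{e_\xi}(k)\|^2+(2\|\mathcal{M}^TW_{\mathcal{M}}\|^2+\|W_{\mathcal{M}}\|)\|w(k)\|^2$. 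Splitting $\|w\|^2\leq 2\|\Gamma(\mathcal{L}{\otimes}I_{n_\text{sum}}+A_d)(\bm{1}_{n_\text{sum}}{\otimes}\bm{e_x})\|^2+2\alpha^2\|\bm{1}_{n_\text{sum}}{\otimes}h\|^2$ then produces precisely the factor $4\|\mathcal{M}^TW_{\mathcal{M}}\|^2+2\|W_{\mathcal{M}}\|$ common to both $\beta_{\xi x}$ and $b_{2i}$.

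The first piece is routine: $\|\Gamma(\mathcal{L}{\otimes}I_{n_\text{sum}}+A_d)(\bm{1}_{n_\text{sum}}{\otimes}\bm{e_x})\|^2\leq n_{\text{sum}}\|\Gamma(\mathcal{L}{\otimes}I_{n_\text{sum}}+A_d)\|^2\|\bm{e_x}\|^2$ using $\|\bm{1}_{n_\text{sum}}\|^2=n_{\text{sum}}$, which combined with the factor above gives exactly $\beta_{\xi x}\|\bm{e_x}(k)\|^2$. For the second piece I would use $\|\bm{1}_{n_\text{sum}}{\otimes}h\|^2=n_{\text{sum}}\sum_{i=1}^N\|h_i\|^2$ together with $\|h_i\|^2\leq\|\frac{\bm{1}_{n_i}u_i^T}{n_i^2}\|^2\|(I_{n_i}{\otimes}\bm{1}_{n_i}^T)\bm{\psi}_i\|^2$, recalling $h_i=\frac{\bm{1}_{n_i}u_i^T}{n_i^2}(I_{n_i}{\otimes}\bm{1}_{n_i}^T)\bm{\psi}_i$. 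The crucial input is the estimate $\|(I_{n_i}{\otimes}\bm{1}_{n_i}^T)\bm{\psi}_i(k)\|^2\leq b_{0i}(\|\bm{e_{\psi_i}}(k)\|^2+\|\bm{e_{\xi_i}}(k)\|^2+\|\bm{e_{\bar{x}}}(k)\|^2)$ already established within the proof of Lemma \ref{lemma.bar.x}; substituting it, assembling the constants into $b_{2i}$, passing to $\max_i\{b_{2i}\}$, and summing $\|\bm{e_{\bar{x}}}\|^2$ over the $N$ coalitions (which accounts for the factor $N$) delivers the $\alpha^2\max_i\{b_{2i}\}(\|\bm{e_\psi}(k)\|^2+\|\bm{e_{\xi}}(k)\|^2+N\|\bm{e_{\bar{x}}}(k)\|^2)$ term.

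The main obstacle is essentially bookkeeping rather than a conceptual hurdle: one must choose the Young weight so that exactly ${-}\frac{1}{2}\|\bm{e_\xi}\|^2$ survives, track the $\alpha^2$ scaling that distinguishes the $h$-dependent contribution from the $\bm{e_x}$-dependent one, and verify that the constants assembled from the cross and quadratic estimates coincide \emph{verbatim} with the stated definitions of $\beta_{\xi x}$ and $b_{2i}$. The only nontrivial analytical ingredient is the reuse of the $\bm{\psi}_i$-projection bound from Lemma \ref{lemma.bar.x}, which is what allows the $h$-term to be re-expressed through $\|\bm{e_\psi}\|^2$, $\|\bm{e_\xi}\|^2$, and $\|\bm{e_{\bar{x}}}\|^2$; everything else follows from the Schur identity for $\mathcal{M}$ and submultiplicativity of the spectral norm.
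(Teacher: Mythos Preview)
Your proposal is correct and follows essentially the same approach as the paper. The only cosmetic difference is that the paper expands all six terms of the quadratic form separately and applies Young's inequality twice (each contributing $\tfrac{1}{4}\|\bm{e_\xi}\|^2$), whereas you first group the two forcing terms into a single $w(k)$, apply Young once with weight $\tfrac{1}{2}$, and then split $\|w\|^2$ via $(a+b)^2\le 2a^2+2b^2$; both routes produce exactly the same constants $\beta_{\xi x}$ and $b_{2i}$, and both invoke the same $b_{0i}$ bound on $\|(I_{n_i}{\otimes}\bm{1}_{n_i}^T)\bm{\psi}_i\|^2$ from the proof of Lemma~\ref{lemma.bar.x}.
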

	\begin{proof}
From the iteration of $\bm{e_\xi}$ in (\ref{eq.e_xi.k+1.k}), one can obtain that
		\begin{equation}\label{eq.V_psi.k+1.k.1}
		\begin{aligned}
		&V_{\xi}(k+1){-}V_{\xi}(k)\\
		=&\bm{e_\xi}({k})^{\rm{T}}(\mathcal{M}^{\rm{T}} W_\mathcal{M}\mathcal{M} {-}W_\mathcal{M} )\bm{e_{\xi}}({k})\\
		&+2\bm{e_\xi}({k})^{\rm{T}}\mathcal{M}^{\rm{T}} W_\mathcal{M}\Gamma(\mathcal{L}{\otimes I_{n_\text{sum}}}+A_d)\big(\bm{1}_{n_{\text{sum}}}{\otimes}\bm{e_x}(k)\big)\\
		&+2\bm{e_\xi}({k})^{\rm{T}}\mathcal{M}^{\rm{T}} W_\mathcal{M}\big(\bm{1}_{n_\text{sum}}{\otimes}(\alpha h(k))\big)\\
		&+\big(\bm{1}_{n_{\text{sum}}}{\otimes}\bm{e_x}(k)\big)^{\rm{T}}\big(\Gamma(\mathcal{L}{\otimes I_{n_\text{sum}}}+A_d)\big)^{\rm{T}}W_\mathcal{M}{\times}\\
		&\Gamma(\mathcal{L}{\otimes I_{n_\text{sum}}}+A_d)\big(\bm{1}_{n_{\text{sum}}}{\otimes}\bm{e_x}(k)\big)\\
		&+2\big(\bm{1}_{n_{\text{sum}}}{\otimes}\bm{e_x}(k)\big)^{\rm{T}}\big(\Gamma(\mathcal{L}{\otimes I_{n_\text{sum}}}+A_d)\big)^{\rm{T}}W_\mathcal{M}\times\\
		&\big(\bm{1}_{n_\text{sum}}{\otimes}(\alpha h(k))\big)\\
		&+\big(\bm{1}_{n_\text{sum}}{\otimes}(\alpha h(k))\big)^{\rm{T}}W_{\mathcal{M}}\big(\bm{1}_{n_\text{sum}}{\otimes}(\alpha h(k))\big)\\
		{\leq}&-\bm{e_\xi}({k})^{\rm{T}}\bm{e_{\xi}}({k})\\
		&+2\sqrt{n_{\text{sum}}}\|\mathcal{M}^{\rm{T}} W_\mathcal{M}\Gamma(\mathcal{L}{\otimes I_{n_\text{sum}}}+A_d)\|\|\bm{e_\xi}({k})\|\|\bm{e_x}(k)\|\\
		&+2\alpha\sqrt{n_{\text{sum}}}\|\mathcal{M}^{\rm{T}} W_\mathcal{M}\|\|\bm{e_\xi}({k})\|\|h(k)\|\\
		&+n_{\text{sum}}\|W_\mathcal{M}\|\|\Gamma(\mathcal{L}{\otimes I_{n_\text{sum}}}+A_d)\|^2\big\|\bm{e_x}(k)\big\|^2\\
		&+2\alpha{n_{\text{sum}}}\|\Gamma(\mathcal{L}{\otimes I_{n_\text{sum}}}+A_d)\|\|W_\mathcal{M}\|\big\|\bm{e_x}(k)\big\|\|h(k)\|\\
		&+\alpha^2{n_{\text{sum}}}\|W_{\mathcal{M}}\|\|h(k)\|^2\\
		{\leq}&-\bm{e_\xi}({k})^{\rm{T}}\bm{e_{\xi}}({k})+\frac{1}{4}\|\bm{e_{\xi}}({k})\|^2\\
		&+  4{n_{\text{sum}}}\|\mathcal{M}^{\rm{T}} W_\mathcal{M}\|^2\|\Gamma(\mathcal{L}{\otimes I_{n_\text{sum}}}+A_d)\|^2\|\bm{e_x}(k)\|^2\\
		&+\frac{1}{4}\|\bm{e_{\xi}}({k})\|^2+4\alpha^2{n_{\text{sum}}}\|\mathcal{M}^{\rm{T}} W_\mathcal{M}\|^2\|h(k)\|^2\\
		&+2n_{\text{sum}}\|W_\mathcal{M}\|\|\Gamma(\mathcal{L}{\otimes I_{n_\text{sum}}}+A_d)\|^2\big\|\bm{e_x}(k)\big\|^2\\
		&+2\alpha^2{n_{\text{sum}}}\|W_{\mathcal{M}}\|\|h(k)\|^2\\
		=&{-}\frac{1}{2}\|\bm{e_{\xi}}({k})\|^2
		+\beta_{\xi x}\big\|\bm{e_x}(k)\big\|^2\\
		&+\alpha^2{n_{\text{sum}}}\big(4\|\mathcal{M}^{\rm{T}} W_\mathcal{M}\|^2+2\|W_\mathcal{M}\|\big)\|h(k)\|^2.
		\end{aligned}
		\end{equation}

		Note that
		\begin{equation}
		\begin{aligned}
		&{n_{\text{sum}}}\big(4\|\mathcal{M}^{\rm{T}} W_\mathcal{M}\|^2+2\|W_\mathcal{M}\|\big)\|h(k)\|^2\\
		{\leq}&{n_{\text{sum}}}\big(4\|\mathcal{M}^{\rm{T}} W_\mathcal{M}\|^2+2\|W_\mathcal{M}\|\big)\times\\
		&\sum_{i=1}^N\|\frac{\bm{1}_{n_i}u_i^{\rm{T}}}{n_i^2}\|^2\|(I_{n_i}{\otimes}\bm{1}_{n_i}^{\rm{T}}){\bm{\psi}}_i(k)\|^2\\
		{\leq}& \max\limits_{i}\{b_{2i}\}(\|{\bm{e_\psi}}(k)\|^2+\|\bm{e_{\xi}}(k)\|^2+N\|\bm{e_{\bar{x}}}(k)\|^2).
		\end{aligned}
		\end{equation}
		
		It follows that
		\begin{equation}
		\begin{aligned}
		&V_{\xi}(k+1){-}V_{\xi}(k)\\
		{\leq}&{-}\frac{1}{2}\|\bm{e_{\xi}}({k})\|^2
		+\beta_{\xi x}\big\|\bm{e_x}(k)\big\|^2\\
		&+\alpha^2\max\limits_{i}\{b_{2i}\}(\|{\bm{e_\psi}}(k)\|^2+\|\bm{e_{\xi}}(k)\|^2+N\|\bm{e_{\bar{x}}}(k)\|^2).
		\end{aligned}
		\end{equation}
	\end{proof}

\begin{lemma}\label{lemma.x}
Suppose that Assumption \ref{assp.graph} holds. Then, the function $V_x(t)$ defined in (\ref{eq.Lya.part}) satisfies
	\begin{equation}\label{eq.lemma.V_x}
	\begin{aligned}
	&V_{x}(k+1)-V_x(k)\\
	{\leq}&-\frac{1}{2}\bm{e_{x}}^{\rm{T}}({k})\bm{e_{x}}({k})\\
	&+\alpha^2\max\limits_{i}\{b_{3i}\}(\|{\bm{e_\psi}}(k)\|^2+\|\bm{e_{\xi}}(k)\|^2+N\|\bm{e_{\bar{x}}}(k)\|^2).
	\end{aligned}
	\end{equation}
\end{lemma}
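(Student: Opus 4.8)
The plan is to mirror the argument already used for $V_\psi$ in Lemma~\ref{lemma.psi} and for the $\alpha^2$ term in Lemma~\ref{lemma.bar.x}. First I would exploit the block-diagonal structure $W_R=\diag\{W_{R_1},\dots,W_{R_N}\}$ to split $V_x(k+1)-V_x(k)=\sum_{i=1}^N\big(\bm{e_{x_i}}(k+1)^TW_{R_i}\bm{e_{x_i}}(k+1)-\bm{e_{x_i}}(k)^TW_{R_i}\bm{e_{x_i}}(k)\big)$ and treat each coalition separately. Substituting the error dynamics (\ref{eq.e_x.i.k+1.k}), namely $\bm{e_{x_i}}(k+1)=\bar R_i\bm{e_{x_i}}(k)-\frac{\alpha}{n_i}\bar I_{u_i}(I_{n_i}\otimes\bm{1}_{n_i}^T)\bm{\psi}_i(k)$, and expanding the quadratic form gives three contributions: the pure quadratic $\bm{e_{x_i}}(k)^T\bar R_i^TW_{R_i}\bar R_i\bm{e_{x_i}}(k)$, a cross term, and an $\alpha^2$ term carrying $\bar I_{u_i}^TW_{R_i}\bar I_{u_i}$.

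Next I would invoke the discrete Lyapunov identity $\bar R_i^TW_{R_i}\bar R_i-W_{R_i}=-I_{n_i}$ (established right after (\ref{eq.e_x.i.k+1.k}) from the Schur property of $\bar R_i$, which is where Assumption~\ref{assp.graph} enters), so that the pure quadratic minus $\bm{e_{x_i}}(k)^TW_{R_i}\bm{e_{x_i}}(k)$ collapses to $-\|\bm{e_{x_i}}(k)\|^2$. For the cross term $-\frac{2\alpha}{n_i}\bm{e_{x_i}}(k)^T\bar R_i^TW_{R_i}\bar I_{u_i}(I_{n_i}\otimes\bm{1}_{n_i}^T)\bm{\psi}_i(k)$, I would apply Young's inequality in the form $-2a^Tb\le\tfrac12\|a\|^2+2\|b\|^2$ with $a=\bm{e_{x_i}}(k)$, so that $\tfrac12\|\bm{e_{x_i}}(k)\|^2$ is absorbed into $-\|\bm{e_{x_i}}(k)\|^2$ to leave $-\tfrac12\|\bm{e_{x_i}}(k)\|^2$, while the residue together with the $\alpha^2$ term is bounded by $\frac{\alpha^2}{n_i^2}\big(2\|\bar R_i^TW_{R_i}\bar I_{u_i}\|^2+\|\bar I_{u_i}^TW_{R_i}\bar I_{u_i}\|\big)\|(I_{n_i}\otimes\bm{1}_{n_i}^T)\bm{\psi}_i(k)\|^2$.

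The final step reuses, essentially verbatim, the estimate derived inside the proof of Lemma~\ref{lemma.bar.x}, namely $\|(I_{n_i}\otimes\bm{1}_{n_i}^T)\bm{\psi}_i(k)\|^2\le b_{0i}\big(\|\bm{e_{\psi_i}}(k)\|^2+\|\bm{e_{\xi_i}}(k)\|^2+\|\bm{e_{\bar x}}(k)\|^2\big)$, which itself follows from the decomposition $\bm{\psi}_i=v_i\otimes\bar{\bm\psi}_i+\bm{e_{\psi_i}}$ together with (\ref{eq.bar.psi.P.i}), (\ref{eq.bar.P.i.gradient}) and the Lipschitz bound (\ref{eq.ni.barPi.2}). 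Plugging this in, recognizing that $b_{3i}=\frac{1}{n_i^2}\big(2\|\bar R_i^TW_{R_i}\bar I_{u_i}\|^2+\|\bar I_{u_i}^TW_{R_i}\bar I_{u_i}\|\big)b_{0i}$, and summing over $i$ using $\sum_i\|\bm{e_{\psi_i}}(k)\|^2=\|\bm{e_\psi}(k)\|^2$, $\sum_i\|\bm{e_{\xi_i}}(k)\|^2=\|\bm{e_\xi}(k)\|^2$, $\sum_i\|\bm{e_{\bar x}}(k)\|^2=N\|\bm{e_{\bar x}}(k)\|^2$ yields exactly (\ref{eq.lemma.V_x}).

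I do not expect a genuine obstacle here; the computation is routine and introduces no ideas beyond those already appearing in Lemmas~\ref{lemma.psi} and \ref{lemma.bar.x}. The only point requiring care is the constant bookkeeping in Young's inequality: the split must be chosen so that precisely $\tfrac12\|\bm{e_{x_i}}(k)\|^2$ survives and the coefficient multiplying $\|(I_{n_i}\otimes\bm{1}_{n_i}^T)\bm{\psi}_i(k)\|^2$ aligns with the definition of $b_{3i}$, after which the factor $b_{0i}$ produced by the reused bound completes the match.
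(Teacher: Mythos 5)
Your proposal is correct and follows essentially the same route as the paper: block-diagonal splitting via $W_R$, substitution of the error dynamics (\ref{eq.e_x.i.k+1.k}), the discrete Lyapunov identity $\bar R_i^TW_{R_i}\bar R_i-W_{R_i}=-I_{n_i}$, Young's inequality on the cross term leaving $-\tfrac12\|\bm{e_{x_i}}(k)\|^2$, and reuse of the bound $\|(I_{n_i}\otimes\bm{1}_{n_i}^T)\bm{\psi}_i(k)\|^2\le b_{0i}(\|\bm{e_{\psi_i}}(k)\|^2+\|\bm{e_{\xi_i}}(k)\|^2+\|\bm{e_{\bar x}}(k)\|^2)$ from the proof of Lemma \ref{lemma.bar.x}, with the constants matching $b_{3i}$ exactly. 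No gaps.
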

\begin{proof}
We have
\begin{equation*}\label{eq.V_x.k+1.k}
\begin{aligned}
&V_{x}(k+1)-V_x(k)\\
=&\sum_{i=1}^{N}\bigg(\bm{e_{x_i}}^{\rm{T}}({k})\big(\bar{R}_i^{\rm{T}} W_{R_i}\bar{R}_i {-}W_{R_i}\big)\bm{e_{x_i}}({k})\\
&{-}\frac{2{\alpha}}{n_i}\bm{e_{x_i}}^{\rm{T}}(k)\bar{R}_{i}^{\rm{T}}W_{R_i}\bar{I}_{u_i}{(I_{n_i}{\otimes}\bm{1}_{n_i}^{\rm{T}})}{\bm{\psi}}_i(k)\\
&+\frac{{\alpha}^2}{n_i^2}\big(\bar{I}_{u_i}{(I_{n_i}{\otimes}\bm{1}_{n_i}^{\rm{T}})}{\bm{\psi}}_i(k)\big)^{\rm{T}}W_{R_i}\bar{I}_{u_i}{(I_{n_i}{\otimes}\bm{1}_{n_i}^{\rm{T}})}{\bm{\psi}}_i(k)\bigg)\\
\leq&\sum_{i=1}^{N}\bigg(-\bm{e_{x_i}}^{\rm{T}}({k})\bm{e_{x_i}}({k})+\frac{1}{2}\bm{e_{x_i}}^{\rm{T}}({k})\bm{e_{x_i}}({k})\\
&+\frac{2{\alpha^2}}{n_i^2}\|\bar{R}_{i}^{\rm{T}}W_{R_i}\bar{I}_{u_i}\|^2\|{(I_{n_i}{\otimes}\bm{1}_{n_i}^{\rm{T}})}{\bm{\psi}}_i(k)\|^2\\
&+\frac{{\alpha}^2}{n_i^2}\|\bar{I}_{u_i}^{\rm{T}}W_{R_i}\bar{I}_{u_i}\|\|{(I_{n_i}{\otimes}\bm{1}_{n_i}^{\rm{T}})}{\bm{\psi}}_i(k)\|^2\bigg)\\
{\leq}&-\frac{1}{2}\bm{e_{x}}^{\rm{T}}({k})\bm{e_{x}}({k})\\
&+\sum_{i=1}^N\alpha^2b_{3i}(\|{\bm{e_\psi}}_i(k)\|^2+\|\bm{e_{\xi_{i}}}(k)\|^2+\|\bm{e_{\bar{x}}}(k)\|^2)\\
{\leq}&-\frac{1}{2}\bm{e_{x}}^{\rm{T}}({k})\bm{e_{x}}({k})\\
&+\alpha^2\max\limits_{i}\{b_{3i}\}(\|{\bm{e_\psi}}(k)\|^2+\|\bm{e_{\xi}}(k)\|^2+N\|\bm{e_{\bar{x}}}(k)\|^2).
\end{aligned}
\end{equation*}
\end{proof}

\vspace{1.5ex}


\end{document}